\newtheorem{thm}{Theorem}[section]
\newtheorem{cor}[thm]{Corollary}
\newtheorem{lem}[thm]{Lemma}
\theoremstyle{definition}
\newtheorem{defn}[thm]{Definition}
\numberwithin{equation}{section}
\newcommand{\secref}[1]{Section~\textup{\ref{#1}}}
\newcommand{\thmref}[1]{Theorem~\textup{\ref{#1}}}
\newcommand{\lemref}[1]{Lemma~\textup{\ref{#1}}}
\newcommand{\diagref}[1]{Diagram~\textup{\ref{#1}}}
\newcommand\I{\mathscr{I}}
\newcommand\K{\mathcal{K}}
\renewcommand\H{\mathcal{H}}
\newcommand{\Inn}{\mathscr{I}^{N}}
\renewcommand{\epsilon}{\varepsilon}
\newcommand{\inv}{^{-1}}
\newcommand{\what}{\widehat}
\newcommand{\id}{\text{id}}
\renewcommand{\subset}{\subseteq}
\newcommand{\xt}{\otimes}
\newcommand{\oalpha}{\overset{\sim}{\alpha}}
\newcommand{\Ext}{\text{Ext}}
\newcommand{\Ind}{\text{Ind}}
\newcommand{\Res}{\text{Res}}
    \newcommand{\iso}{isomorphism}
    \newcommand{\sur}{surjection}
\newcommand{\pa}{\ensuremath{\Phi_A}}
\newcommand{\pai}{\ensuremath{\Phi_{A/I}}}
\newcommand{\lag}{\ensuremath{\ensuremath{\Lambda_{A\rtimes G}}}}
\newcommand{\laig}{\ensuremath{\ensuremath{\Lambda_{A/I\rtimes G}}}}
\newcommand{\iag}{\ensuremath{\ensuremath{i_{A\rtimes G}}}}
\newcommand{\iaig}{\ensuremath{\ensuremath{i_{A/I\rtimes G}}}}
\newcommand{\qgg}{\ensuremath{\ensuremath{q\rtimes G\rtimes G}}}
\newcommand{\qgrg}{\ensuremath{\ensuremath{q\rtimes G\rtimes_r G}}}
\newcommand{\ja}{\ensuremath{\ensuremath{j_A}}}
\newcommand{\ia}{\ensuremath{\ensuremath{j_I}}}
\newcommand{\jai}{\ensuremath{\ensuremath{j_{A/I}}}}
\newcommand{\jg}{\ensuremath{\ensuremath{j_G}}}
\newcommand{\agg}{\ensuremath{\ensuremath{A\rtimes G\rtimes G}}}
\newcommand{\agrg}{\ensuremath{\ensuremath{A\rtimes G\rtimes_r G}}}
\newcommand{\igg}{\ensuremath{\ensuremath{I\rtimes G\rtimes G}}}
\newcommand{\igrg}{\ensuremath{\ensuremath{I\rtimes G\rtimes_r G}}}
\newcommand{\aigg}{\ensuremath{\ensuremath{A/I\rtimes G\rtimes G}}}
\newcommand{\aigrg}{\ensuremath{\ensuremath{A/I\rtimes G\rtimes_r G}}}
\renewcommand{\l}{\left}
\renewcommand{\r}{\right}
\newcommand{\Lad}{\text{Lad}}
\renewcommand{\o}{\overline}
\renewcommand{\)}{\textup)}
\newcommand{\cst}{\ensuremath{C^*}}
\newcommand{\spn}{\operatorname{span}}
\begin{document}
\begin{abstract}
  Given a coaction $\delta$ of a locally compact group $G$ on a $\cst$-algebra $A$, we study the relationship between two different forms of coaction invariance of ideals of $A$ and the ideals of the corresponding crossed product $\cst$-algebra $A \rtimes_{\delta} G$.  In particular, we characterize when these two notions of invariance are equivalent.  
\end{abstract}

\title[Two Notions of Coaction Invariance of Ideals]{Comparing Two Notions of Coaction Invariance of Ideals in C*-Algebras}

\author[Gillespie]{Matthew Gillespie}
\address{School of Mathematical and Statistical Sciences
\\Arizona State University
\\Tempe, Arizona 85287}
\email{mjgille1@asu.edu}

\author[Jones]{Benjamin Jones}
\address{School of Mathematical and Statistical Sciences
\\Arizona State University
\\Tempe, Arizona 85287}
\email{brjone16@asu.edu}

\author[Kaliszewski]{S. Kaliszewski}
\address{School of Mathematical and Statistical Sciences
\\Arizona State University
\\Tempe, Arizona 85287}
\email{kaliszewski@asu.edu}

\author[Quigg]{John Quigg}
\address{School of Mathematical and Statistical Sciences
\\Arizona State University
\\Tempe, Arizona 85287}
\email{quigg@asu.edu}

\date{\today}

\subjclass[2000]{Primary  46L55}
\keywords{
action, 
coaction, 
crossed product duality, 
ideal,
Morita equivalence}

\maketitle

\section{Introduction}\label{sec:intro}
Given a $\cst$-dynamical system $(A,G,\alpha)$ with $\I(A)$ the lattice of ideals of $A$, one can form a sub-lattice $\I_{\alpha}(A)$ of the action invariant ideals.  That is, $\alpha_s(I) \subset I$ for all $s \in G$.  However in the dual situation, there are many different ways to define coaction invariance.  Given a cosystem $(A,G,\delta)$, Nilsen in \cite{ndual} defines coaction invariance in terms of induced regular representations.  Namely if $I = \ker q$, then $I$ is Nilsen $\delta$-invariant provided $\ker q = \ker((q \otimes \lambda) \circ \delta)$.  Nilsen used this notion of coaction invariance to establish bijections between the $\alpha$ and $\delta$-invariant ideals of $A$ and the $\what \alpha$ and $\what \delta$ invariant ideals of the crossed products $A \rtimes_{\alpha} G$, $A \rtimes_{\delta} G$ respectively.  Recently in \cite{ladder} (and even more recently in \cite{qladder}) an analogous result was established using what is referred to as strong $\delta$-invariance in the literature.  Strong $\delta$-invariance of $I$ simply requires that $\delta|_{I}$ defines a coaction of $G$ on the $\cst$-algebra $I$.  These two notions coincide for amenable $G$ as we demonstrate, but easily fail to be whenever $G$ is non-amenable.  In 
\secref{sec:prelim}
 we establish the needed definitions and theorems.  
In \secref{comparing}, we show that Nilsen's definition of action invariance aligns with the standard notion and characterize when Nilsen and strong invariance are equal for coactions of groups.  We also establish a simple lattice isomorphism between the lattices of strong and Nilsen invariant ideals 
when $\delta$ is maximal or normal.  

\section{Preliminaries}\label{sec:prelim}
 Given a locally compact group $G$, we will always denote $\K \coloneqq \K(L^2(G))$.  The undecorated symbol $\otimes$ will always mean the minimal tensor product of $\cst$-algebras or the Hilbert space tensor product.  Moreover, $G$ will always denote a locally compact group, $A,B$ will always be $\cst$-algebras, $\alpha$ will always denote an action of $G$ on a $\cst$-algebra, $\delta$ will always denote a coaction of $G$ on a $\cst$-algebra, and actions or coactions decorated with a $\what{}$ symbol will always denote the dual coaction or dual action of $G$ on the corresponding crossed product $\cst$-algebra.  The symbol $M$ will either denote the multipliers of a $\cst$-algebra or the multiplication representation of $C_0(G)$.  It will be clear from context which is which. Throughout we use the notation
$\pi(A)B \coloneqq \{\pi(a)b \mid a \in A, b \in B\}$ for a $*$-homomorphism $\pi: A \to M(B)$. Finally, $\lambda: \cst(G) \to \cst_r(G)$ will always denote the integrated form of the left regular representation $\lambda: G \to \mathcal{B}(L^2(G))$ by $\lambda_s(\xi)(t) \coloneqq \xi(s^{-1}t)$.  The right regular representation $\rho: G \to \mathcal{B}(L^2(G))$ is given by $\rho_s\xi(t) = \Delta_G(s)^{\frac{1}{2}}\xi(ts)$ for $\Delta_G$ the modular function of $G$.    
 
 \begin{defn}
A coaction is an injective non-degenerate $*$-homomorphism 
$\delta: A \to M(A \otimes \cst(G))$ satisfying 
\begin{enumerate}[(a)]
    \item $\delta(A)(1_A \otimes \cst(G)) \subseteq A \otimes \cst(G)$;
    \item $(\delta \otimes \id_{\cst(G)}) \circ \delta = (\id_A \otimes \delta_G) \circ \delta$
\end{enumerate}
where $\delta_G: \cst(G) \to M(\cst(G) \otimes \cst(G))$ is the integrated form of the strictly continuous unitary homomorphism $s \mapsto s \otimes s$.  Finally, we say $\delta$ is continuous provided that \[\o\spn \{ \delta(A)(1_A \otimes \cst(G)) \} = A \otimes \cst(G).\] 
 \end{defn}
  We will always assume our coactions are continuous and many times may refer to the data $(A,G,\delta)$ as a coaction (similarly for actions).  Given an action $(A,G,\alpha)$ and a coaction $(A,G,\delta)$, we denote the full crossed product $\cst$-algebras as $A \rtimes_{\alpha} G$ and $A \rtimes_{\delta} G$ where full crossed products come with universal covariant homomorphisms $(i^{\alpha}_A,i^{\alpha}_G)$ for actions and $(j^{\delta}_A,j^{\delta}_G)$ for coactions respectively (we will sometimes write $j_A$ instead of $j_A^{\delta}$ if the context is clear, and similarly for the other maps mentioned; also, we frequently write $\rtimes G$ rather than $\rtimes_\delta G$, and similarly for the crossed product by the dual action.
  In fact, we can (and will) take $j_A = (\id_A \otimes \lambda) \circ \delta$ and $j_G = 1_A \otimes M$, see \cite[Definition~A.39]{enchilada}.  We denote the reduced crossed product of an action by $A \rtimes_{\alpha,r} G$ which is the image of $A \rtimes_{\alpha} G$ under the regular representation $\Lambda^{\alpha}_{A}: A \rtimes_{\alpha} G \to M(A \otimes \K)$, which is determined by 
\begin{enumerate}[(a)]
    \item $\Lambda^{\alpha}_A \circ i_A = (\id_A \otimes M^{-1}) \circ \oalpha$;
    \item $\Lambda^{\alpha}_A \circ i_G = 1_A \otimes \lambda$
  \end{enumerate}
where $\oalpha: A \to C_b(G,A) \subset M(A \otimes C_0(G)) \cong C_b(G,M^{\beta}(A))$ is the $C_0(G)$-coaction given by $\oalpha(a)(s) \coloneqq \alpha_s(a)$ and $M^{-1}(f)$ is multiplication by $f((\cdot)^{-1})$.  In other words, $\Lambda^{\alpha}_A = ((\id_A \otimes M^{-1}) \circ \oalpha) \rtimes (1_A \otimes \lambda)$. 
We will write $\Lambda_A$, or even $\Lambda$, if no confusion is likely.
There 
is
also 
the
\textit{canonical surjection} 
of the double crossed 
product
onto the stabilization of $A$ denoted by 
$\Phi_A^{\delta}: A \rtimes_{\delta} G \rtimes_{\what \delta} G \to A \otimes \K$ determined by 
\begin{enumerate}[(a)]
    \item $\Phi_A^{\delta} \circ i_{A \rtimes_{\delta} G} \circ j_A = (\id_A \otimes \lambda) \circ \delta$;
    \item $\Phi_A^{\delta} \circ i_{A \rtimes_{\delta} G} \circ j_G = 1_A \otimes M$;
    \item $\Phi_{A}^{\delta} \circ i_G = 1_A \otimes \rho$.
\end{enumerate}
 That is, $\Phi_A^{\delta} = [((\id_A \otimes \lambda) \circ \delta) \rtimes (1_A \otimes M)] \rtimes (1_A \otimes \rho)$.  See 
 \cite[Appendix~A]{enchilada} 
 and \cite{ndual} for details on crossed products and the canonical surjections.
 We write $\Phi$ or $\Phi_A$ when confusion is unlikely.
 \begin{defn}
Let $(A,G,\delta)$ be a coaction.
\begin{enumerate}[(a)]
    \item $\delta$ is called \textit{normal} if $j_A$ is faithful.
    \item $\delta$ is called \textit{maximal} if $\Phi_{A}$ is faithful.
\end{enumerate}
 \end{defn}

All coactions have canonical \textit{normalizations} and \textit{maximalizations}, denoted $(A^n,\delta^n)$ (which will appear in 
\thmref{thmid} below) and $(A^m,\delta^m)$, (see \cite{enchilada,fischer} for details). An important characterization of normality that we will use is the following.

\begin{thm}\label{thm3}
For any coaction $(A,G,\delta$),
there is an \iso\ $\Upsilon$
and a \sur\ $\Psi$
making the diagram
\[
\begin{tikzcd}
A\rtimes G\rtimes G 
\arrow[r,"\Phi"] \arrow[d,"\Lambda"']
&A\xt \K \arrow[d,"q\xt \id"] \arrow[dl,"\Psi"',dashed]
\\
A\rtimes G\rtimes_r G \arrow[r,"\Upsilon"',"\cong",dashed]
&A/\ker j_A\xt \K
\end{tikzcd}
\]
commute.
Moreover, $\delta$ is normal if and only if $\Psi$ is an isomorphism, equivalently if and only if $\ker\Phi=\ker\Lambda$.
\end{thm}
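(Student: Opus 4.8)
The plan is to obtain the isomorphism $\Upsilon$ from crossed-product duality, then verify the outer square on generators, and finally read off $\Psi$ and the normality characterization by a formal kernel chase. For the first step I would invoke Katayama duality in its normalized form: for any coaction the reduced double crossed product is isomorphic to the stabilized normalization, which (since $j_A = (\id_A\xt\lambda)\circ\delta$, so that $A/\ker j_A = A^n$) produces an isomorphism $\Upsilon\colon A\rtimes G\rtimes_r G\to A/\ker j_A\xt\K$ (see \cite{enchilada,ndual}). The real content of this step is that the two surjections $\Lambda$ and $(q\xt\id)\circ\Phi$ out of $A\rtimes G\rtimes G$ have equal kernels, so that $\Upsilon$ is the induced isomorphism on the quotient.

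Next I would check that the outer square commutes, that is, $\Upsilon\circ\Lambda = (q\xt\id)\circ\Phi$. Since $A\rtimes G\rtimes G$ is generated by $i_{A\rtimes G}(j_A(A))$, $i_{A\rtimes G}(j_G(C_0(G)))$, and $i_G(C^*(G))$, it suffices to compare the two homomorphisms on these three families. Using the defining formulas (a)--(c) for $\Phi$, the formulas for the regular representation $\Lambda$ of the dual action $\what\delta$, and the explicit form of $\Upsilon$, one checks that both composites send $i_{A\rtimes G}(j_A(a))$ to $(q\xt\id)\big((\id_A\xt\lambda)\delta(a)\big)$, send $i_{A\rtimes G}(j_G(f))$ to $1\xt M(f)$, and send $i_G(s)$ to $1\xt\rho_s$ in $A^n\xt\K$.

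With the square established and $\Upsilon$ injective, I obtain $\ker\Lambda = \ker(\Upsilon\circ\Lambda) = \ker\big((q\xt\id)\circ\Phi\big)\supseteq\ker\Phi$. Hence $\ker\Phi\subseteq\ker\Lambda$, and as $\Phi$ is surjective there is a unique $*$-homomorphism $\Psi\colon A\xt\K\to A\rtimes G\rtimes_r G$ with $\Psi\circ\Phi = \Lambda$; it is automatically surjective (because $\Lambda$ is), and precomposing with $\Phi$ shows $\Upsilon\circ\Psi = q\xt\id$, completing the diagram. For the final claim, $\Psi$ is surjective, so it is an isomorphism iff it is injective; and $\Psi\circ\Phi = \Lambda$ with $\Phi$ surjective shows this happens exactly when $\ker\Lambda = \ker\Phi$. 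By the kernel identity above this is equivalent to injectivity of $q\xt\id$ (as $\ran\Phi = A\xt\K$); since $\K$ is nuclear, $\ker(q\xt\id) = (\ker j_A)\xt\K$, so injectivity holds iff $\ker j_A = 0$, i.e.\ iff $j_A$ is faithful, i.e.\ iff $\delta$ is normal. This yields all three equivalences simultaneously.

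The main obstacle is the duality input of the first paragraph together with pinning down $\Upsilon$ on generators in the second; once the outer square is known to commute with $\Upsilon$ an isomorphism, the construction of $\Psi$ and the normality characterization are purely formal, relying only on surjectivity of $\Phi$ and $\Lambda$ and the exactness of $-\xt\K$. I would expect the most error-prone part to be the bookkeeping that matches $\Phi$, $\Lambda$, and $\Upsilon$ on the three families of generators, keeping careful track of $\lambda$, $\rho$, and $M$ acting on $L^2(G)$.
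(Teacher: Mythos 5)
The paper never proves this theorem: it is stated in \secref{sec:prelim} as known background, resting on the normalization machinery of \cite{enchilada,fischer} and Katayama duality for normal coactions, so there is no internal proof to compare yours against. Judged on its own merits, your argument is correct and is precisely the standard route this background result rests on. The formal half is airtight: once the outer square commutes with $\Upsilon$ injective, the factorization $\Lambda = \Psi\circ\Phi$, the identity $\Upsilon\circ\Psi = q\otimes\id$ (by precomposing with the surjection $\Phi$), and the chain of equivalences ($\Psi$ iso $\iff$ $\ker\Phi=\ker\Lambda$ $\iff$ $q\otimes\id$ injective $\iff$ $\ker j_A=0$, using $\ker(q\otimes\id_{\K})=\ker j_A\otimes\K$ by exactness of $\K$) are all valid as written.

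One caution about the first half, which is where the real work lives and where your logical ordering is slightly off. The abstract duality statement ``$A\rtimes G\rtimes_r G\cong A^n\otimes\K$'' does not by itself yield $\ker\Lambda=\ker\bigl((q\otimes\id)\circ\Phi\bigr)$: two surjections with isomorphic images need not have equal kernels, so the kernel equality cannot be ``the real content'' of the duality citation, as your first paragraph suggests. Rather, you must take $\Upsilon$ to be the \emph{concrete} isomorphism --- the composite of the isomorphism of reduced double crossed products induced by the normalization $q\rtimes G\colon A\rtimes G\to A^n\rtimes G$ (an equivariant isomorphism, so it descends to the reduced level) with the Katayama isomorphism $A^n\rtimes G\rtimes_r G\cong A^n\otimes\K$ induced by $\Phi_{A^n}$ --- and then your step-two generator check, whose three formulas are correct, is what establishes commutativity of the outer square; the kernel equality is then a \emph{consequence}, not an input. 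Read in that order your proposal is complete; read literally, paragraph one asserts something that only follows after paragraph two is done.
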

In the above, note that we always have $\ker\Phi\subset \ker\Lambda$.

To discuss Nilsen's notion of action and coaction invariance of (closed two-sided) ideals, we must introduce some notation and recall some concepts from her work.  Many of the following definitions and lemmas are directly from \cite[Section~1,2]{ndual}.

\begin{defn}
Let $\phi: A \to M(B)$ be a $*$-homomorphism.  Define
\begin{enumerate}[(a)]
    \item $\Res_{\phi}: \I(B) \to \I(A)$ by $\Res_{\phi}(\ker \pi) = \ker(\pi \circ \phi)$;
    \item $\Ext_{\phi}: \I(A) \to \I(B)$ by $\Ext_{\phi}(I) = \o\spn \{ B\phi(I)B\}$.
\end{enumerate}
\end{defn}

It is well-known that the definition of $\Res_{\phi}$ is independent of the choice of non-degenerate representation one chooses when writing an ideal of $B$ as a kernel.  For an action $(A,G,\alpha)$ or coaction $(A,G,\delta)$ and $\phi = j_A$ or $i_A$, we denote the $\Res$ map as $\Res_{\alpha} \coloneqq \Res_{i_A}$ and $\Res_{\delta} \coloneqq \Res_{j_A}$.  We use the same convention for $\Ext$.   

\begin{lem}
Let $(A,G,\alpha)$ and $(A,G,\delta)$ be an action and coaction respectively.  Moreover, let $\pi: A \to M(B)$ be a $*$-homomorphism.
\begin{enumerate}[\upshape (a)]
    \item The pair $((\pi \otimes M^{-1}) \circ \oalpha, 1_{B} \otimes \lambda)$ is covariant with integrated form denoted $\Ind_{\alpha}  \pi: A \rtimes_{\alpha} G \to M(B \otimes \K)$ called the induced regular homomorphism of $\pi$ associated to $\alpha$.
    \item The pair $((\pi \otimes \lambda) \circ \delta, 1_{B} \otimes M)$ is covariant with integrated form denoted $\Ind_{\delta}  \pi: A \rtimes_{\delta} G \to M(B \otimes \K)$ called the induced regular homomorphism of $\pi$ associated to $\delta$.
\end{enumerate}
When $B = \K(\H)$ for some Hilbert space $\H$, we say induced regular representation.
\end{lem}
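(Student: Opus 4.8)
The plan is to prove each part by verifying the appropriate covariance identity, exploiting the fact that in both cases the homomorphism $\pi$ is confined to the first tensor leg, so that covariance reduces to a group-level statement independent of $\pi$ and $B$. Once covariance is established, the integrated forms $\Ind_\alpha \pi$ and $\Ind_\delta \pi$ exist by the universal properties of $A \rtimes_\alpha G$ and $A \rtimes_\delta G$.

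For (a), recall that a pair $(\sigma, U)$ with $\sigma \colon A \to M(B \otimes \K)$ a $*$-homomorphism and $U \colon G \to M(B \otimes \K)$ a strictly continuous unitary homomorphism is covariant for $(A,G,\alpha)$ precisely when $\sigma(\alpha_t(a)) = U_t\,\sigma(a)\,U_t^*$ for all $a \in A$ and $t \in G$. With $\sigma = (\pi \otimes M^{-1}) \circ \oalpha$ and $U = 1_B \otimes \lambda$, I would compute both sides as operator-valued functions on $G$. Since $\oalpha(a)$ corresponds to $s \mapsto \alpha_s(a)$ and $M^{-1}$ replaces a function by its value at $s^{-1}$, the multiplier $\sigma(a)$ acts by $s \mapsto \pi(\alpha_{s^{-1}}(a))$, while a direct check shows $(1_B \otimes \lambda_t)\,\sigma(a)\,(1_B \otimes \lambda_t)^*$ acts by $s \mapsto \pi(\alpha_{s^{-1}t}(a))$; this matches $\sigma(\alpha_t(a))$, giving covariance. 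Equivalently, this is exactly the covariance already built into the definition of the regular representation $\Lambda_A^\alpha = ((\id_A \otimes M^{-1}) \circ \oalpha) \rtimes (1_A \otimes \lambda)$ recalled in the preliminaries (the case $\pi = \id_A$, $B = A$), so the general case may instead be deduced by applying the extended homomorphism $\pi \otimes \id_\K$.

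For (b), recall that a pair $(\sigma, \mu)$ with $\sigma \colon A \to M(B \otimes \K)$ and $\mu \colon C_0(G) \to M(B \otimes \K)$ nondegenerate may be taken to be covariant for $(A,G,\delta)$ precisely when
\[
(\sigma \otimes \id_{\cst(G)})(\delta(a)) = (\mu \otimes \id_{\cst(G)})(w_G)\,(\sigma(a) \otimes 1)\,(\mu \otimes \id_{\cst(G)})(w_G)^*,
\]
where $w_G \in M(C_0(G) \otimes \cst(G))$ is the canonical unitary $s \mapsto s$. I would substitute $\sigma = (\pi \otimes \lambda) \circ \delta$ and $\mu = 1_B \otimes M$ and expand both sides using the coaction identity $(\delta \otimes \id_{\cst(G)}) \circ \delta = (\id_A \otimes \delta_G) \circ \delta$. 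On the left this rewrites $(\sigma \otimes \id_{\cst(G)})(\delta(a))$ so that the $B$-leg carries $\pi$ applied to the first leg of $\delta(a)$, with the remaining two legs becoming $(\lambda \otimes \id_{\cst(G)}) \circ \delta_G$; on the right the $B$-leg is identical, and the $\K \otimes \cst(G)$ legs become $(M \otimes \id_{\cst(G)})(w_G)\,(\lambda(\,\cdot\,) \otimes 1)\,(M \otimes \id_{\cst(G)})(w_G)^*$. Thus the identity collapses to the purely group-level statement that $(\lambda, M)$ is covariant for the canonical coaction $\delta_G$ on $\cst(G)$, equivalently the regular representation realizing $\cst(G) \rtimes_{\delta_G} G \cong \K$, which holds independently of $\pi$ and $B$.

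I expect the main obstacle to be part (b): one must fix a usable form of the coaction covariance condition and then carefully track the several multiplier tensor legs, justifying each manipulation at the level of $M(B \otimes \K \otimes \cst(G))$ (strict continuity and nondegeneracy) rather than treating them as formal Sweedler computations, before reducing to the known $\delta_G$-covariance of $(\lambda, M)$. A secondary technical point, relevant only if $\pi$ is permitted to be degenerate, is to confirm that $\pi \otimes \id_\K$ is extendible to multiplier algebras and carries $1_A \otimes \lambda$ and $1_A \otimes M$ to $1_B \otimes \lambda$ and $1_B \otimes M$; the direct computations above sidestep this by retaining $\pi$ in the first leg throughout.
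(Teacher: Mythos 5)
The paper itself offers no proof of this lemma: it is imported verbatim from Nilsen, as the preceding sentence in the paper indicates (``Many of the following definitions and lemmas are directly from [Nil98, Sections 1, 2]''), so there is no in-paper argument to compare yours against. Judged on its own merits, your outline is correct in substance and is essentially the standard argument. For (a), the pointwise computation is exactly right: $\sigma(a)$ acts as multiplication by $s \mapsto \pi(\alpha_{s^{-1}}(a))$, conjugation by $1_B \otimes \lambda_t$ shifts this to $s \mapsto \pi(\alpha_{s^{-1}t}(a))$, which is $\sigma(\alpha_t(a))$. For (b), reducing the covariance identity, via $(\delta \otimes \id) \circ \delta = (\id \otimes \delta_G) \circ \delta$, to the known $\delta_G$-covariance of the pair $(\lambda, M)$ (the fact underlying $\cst(G) \rtimes_{\delta_G} G \cong \K$) is precisely how this is handled in the literature, and you correctly identify that the $B$-leg carrying $\pi$ plays no role in the identity.

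One genuine soft spot: your claim that the direct computation ``sidesteps'' the degenerate-$\pi$ issue is too quick for part (b). Before covariance of $\sigma = (\pi \otimes \lambda) \circ \delta$ can even be discussed, $\sigma$ must be defined: $\delta(a)$ lies in $M(A \otimes \cst(G))$, and when $\pi$ is degenerate the map $\pi \otimes \lambda$ admits no canonical strict extension to that multiplier algebra. One must instead use a module-style definition --- e.g.\ define $\sigma(a)$ through its products $(\pi \otimes \lambda)\bigl(\delta(a)(1 \otimes c)\bigr)(1 \otimes k)$, using $\delta(a)(1 \otimes \cst(G)) \subseteq A \otimes \cst(G)$ and nondegeneracy of $\lambda$ --- and the same issue recurs when you apply $\sigma \otimes \id$ to $\delta(a)$ inside the covariance identity. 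This is exactly the content of Nilsen's Section 1 and is the real technical work behind the lemma; your proof should either invoke it or first treat nondegenerate $\pi$ and then address the general case. (In part (a) the function picture $s \mapsto \pi(\alpha_{s^{-1}}(a))$ genuinely does avoid the problem, since such a bounded strictly continuous $M(B)$-valued function is directly a multiplier of $B \otimes \K$; no analogous function description exists for $\delta(a)$.) Finally, the existence of the integrated forms should be attributed to the universal properties in their ``possibly degenerate covariant homomorphism'' form. None of this changes the structure of your argument, which is sound.
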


The induced regular homomorphisms give rise to a map on the lattices of ideals $\Ind_{\alpha}: \I(A) \to \I(A \rtimes_{\alpha} G)$ by $\Ind_{\alpha}(\ker \pi) = \ker(\Ind_{\alpha} \pi)$.  $\Ind_{\delta}$ is defined in the analogous way.  We now are in a position to define and discuss the five different notions of action and coaction invariance of ideals:

\begin{defn} \label{definv}
Let $(A,G,\alpha)$ and $(A,G,\delta)$ be an action and coaction respectively.  Moreover, let $\ker q = I$ be a (closed two-sided) ideal of $A$ for $q$ the canonical quotient map, and set $\I(A)$ to be the lattice of such ideals of $A$.
\begin{enumerate}[(a)]
    \item $I$ is \textit{$\alpha$-invariant} provided that $\alpha_s(I) \subseteq I$ for all $s \in G$.  We denote the lattice of such ideals by $\I_{\alpha}(A)$;
    \item $I$ is \textit{Nilsen $\alpha$-invariant} provided that $I = \ker(\Ind_{\alpha}  q \circ i_A)$.  We denote the lattice of such ideals by $\I_{\alpha}^N(A)$;
    \item $I$ is \textit{weakly $\delta$-invariant} provided that $I \subset \ker((q \otimes \id_{\cst(G)}) \circ \delta)$.  We denote the lattice of such ideals by $\I_{\delta}^w(A)$;
    \item $I$ is \textit{strongly $\delta$-invariant} provided that $\delta|_{I}$ defines a coaction of $G$ on the $\cst$-algebra $I$.  We denote the lattice of such ideals by $\I_{\delta}^s(A)$;
    \item $I$ is \textit{Nilsen $\delta$-invariant} provided that $I = \ker(\Ind_{\delta}  q \circ j_A)$.  We denote the lattice of such ideals by $\I_{\delta}^N(A)$.
\end{enumerate}
\end{defn}
We omit the ``$(A)$'' and/or the subscripts $\alpha$ or $\delta$ when there is no ambiguity.

As Nilsen points out, her definition of action and coaction invariance is independent of the choice of representation.  Note that each of strong and Nilsen $\delta$-invariance imply weak $\delta$-invariance, see \cite[Section~3]{ndual} and \cite[Proposition~2.2]{ninv}.  Moreover, weak $\delta$-invariance is equivalent to there existing a coaction $\delta^I$ of $G$ on $A/I$ making the following diagram commute: \[\begin{tikzcd}
A \arrow[rr, "\delta"] \arrow[dd, "q"'] &  & M(A \otimes \cst(G)) \arrow[dd, "q \otimes \id_{\cst(G)}"] \\
                                        &  &                                                            \\
A/I \arrow[rr, "\delta^I"', dashed]     &  & M(A/I \otimes \cst(G)).                                    
\end{tikzcd}\]  This is the case since $I \subset \ker((q \otimes \id_{\cst(G)}) \circ \delta)$ actually implies equality, see \cite[Lemma~3.11]{exoticgroup}.  We must also mention a list of fundamental results from \cite{ndual} that we will use without much comment in the next section.

\begin{thm}[{\cite[Proposition~3.1,3.3, Corollary~3.2,3.4]{ndual}}] \label{thm2}
Let $(A,G,\alpha)$ and $(A,G,\delta)$ be an action and coaction respectively with $I \in \I(A)$.  
\begin{enumerate}[\upshape (a)]
    \item The following are equivalent:
    \begin{enumerate}[\upshape (i)]
        \item $\Res_{\delta} \circ \Ind_{\delta}(I) = I$;
        \item $\Res_{\delta} \circ \Ext_{\delta}(I) = I$;
        \item $I \in \I_{\delta}^N(A)$.
    \end{enumerate}
    \item $\Ext_{\delta} = \Ind_{\delta}$ on $\I_{\delta}^N(A)$.
    \item $\Res_{\delta}|_{\I_{\what{\delta}}(A \rtimes_{\delta} G)}$ is a lattice isomorphism onto $\I_{\delta}^N(A)$.
    \item $\Ind_{\delta}(\I_{\delta}^N(A)) = \I_{\what{\delta}}(A \rtimes_{\delta} G)$.
    \item Parts \(a\)--\(d\) are true for $(A,G,\alpha)$.
\end{enumerate}
\end{thm}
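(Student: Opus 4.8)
Since \thmref{thm2} is a transcription of Nilsen's duality results, the plan is to verify that our conventions for $\Ind_\delta$ and $\Res_\delta$ match hers and then assemble the pieces. The first thing I would do is unwind the definitions: because $\Ind_\delta(\ker q)=\ker(\Ind_\delta q)$ and $\Res_\delta(\ker\pi)=\ker(\pi\circ j_A)$, we have $\Res_\delta\circ\Ind_\delta(I)=\ker(\Ind_\delta q\circ j_A)$, which is exactly the defining relation for membership in $\I_\delta^N(A)$; hence (a)(i) and (a)(iii) are literally the same condition. Using $j_A=(\id_A\otimes\lambda)\circ\delta$ together with the covariance formula for $\Ind_\delta q$, one checks $\Ind_\delta q\circ j_A=(q\otimes\lambda)\circ\delta$, so that Nilsen invariance reads $\ker q=\ker((q\otimes\lambda)\circ\delta)$, in agreement with the formulation in the introduction.

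Two structural observations then organize the rest. First, the dual action $\what\delta$ fixes $j_A(A)$ and translates $j_G(C_0(G))$; consequently $\Ext_\delta(I)$, being the ideal generated by $j_A(I)$, is automatically $\what\delta$-invariant for every $I$, and the induced regular homomorphism intertwines $\what\delta$ with $\ad(1\otimes\rho)$, where $1\otimes\rho$ commutes with $1\otimes\lambda$, so $\Ind_\delta(I)=\ker(\Ind_\delta q)$ is $\what\delta$-invariant as well. Second, since Nilsen invariance implies weak invariance and $q\otimes\lambda=(\id\otimes\lambda)\circ(q\otimes\id)$, for weakly invariant $I$ one has $j_A(I)\subseteq\ker(\Ind_\delta q)$, i.e.\ $\Ext_\delta(I)\subseteq\Ind_\delta(I)$. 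Granting for the moment the bijection in (c)--(d), part (a) now closes up: if $I\in\I_\delta^N(A)$ then $I\subseteq\Res_\delta\Ext_\delta(I)\subseteq\Res_\delta\Ind_\delta(I)=I$, giving (ii); conversely if $\Res_\delta\Ext_\delta(I)=I$ then $I=\Res_\delta(\Ext_\delta(I))$ with $\Ext_\delta(I)$ $\what\delta$-invariant, so $I\in\I_\delta^N(A)$ by (c). Part (b) is then immediate: for $I\in\I_\delta^N(A)$ both $\Ext_\delta(I)$ and $\Ind_\delta(I)$ are $\what\delta$-invariant, both mapping to $I$ under $\Res_\delta$, so the injectivity in (c) forces $\Ext_\delta(I)=\Ind_\delta(I)$.

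The crux, and what I expect to be the main obstacle, is the mutually inverse bijection of (c) and (d): that $\Res_\delta$ carries $\I_{\what\delta}(A\rtimes_\delta G)$ into $\I_\delta^N(A)$ and that $\Ind_\delta$ inverts it. One half, $\Res_\delta\circ\Ind_\delta=\id$ on $\I_\delta^N(A)$, is definitional, and $\Ind_\delta$ lands in $\I_{\what\delta}$ by the equivariance above. The hard half is that a $\what\delta$-invariant $J$ satisfies $\Ind_\delta\circ\Res_\delta(J)=J$ and that $\Res_\delta(J)$ is genuinely Nilsen invariant; this is where I would invoke crossed-product duality, passing to $A\rtimes_\delta G\rtimes_{\what\delta}G$ and using the canonical surjection $\Phi$ of \thmref{thm3} to match $\what\delta$-invariant ideals of $A\rtimes_\delta G$ with invariant ideals of $A\otimes\K$, hence with ideals of $A$, while tracking that the matching is implemented by $\Res_\delta$ and $\Ind_\delta$. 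All of the soft lattice formalism above rests on this duality computation.

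Finally, part (e) is the formal dual: the same argument applies verbatim with $i_A$, $\Ind_\alpha$, $\Res_\alpha$, $\Ext_\alpha$ and the dual coaction $\what\alpha$ of $A\rtimes_\alpha G$ replacing the coaction data, or one transports the coaction statements along the regular representation $\Lambda^\alpha_A$. In practice, since \thmref{thm2} is precisely \cite[Proposition~3.1,3.3, Corollary~3.2,3.4]{ndual}, I would simply cite those results once the convention match of the first paragraph is recorded, rather than reproduce the duality argument.
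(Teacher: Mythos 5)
Your proposal is correct and takes essentially the same route as the paper: the paper gives no proof of \thmref{thm2} at all, presenting it purely as a citation of \cite[Proposition~3.1,3.3, Corollary~3.2,3.4]{ndual}, which is exactly where your argument lands after checking that the conventions $\Ind_{\delta}(\ker q)=\ker(\Ind_{\delta}q)$, $\Res_{\delta}(\ker\pi)=\ker(\pi\circ j_A)$, and $\Ind_{\delta}q\circ j_A=(q\otimes\lambda)\circ\delta$ agree with Nilsen's. Your intermediate sketch of how Nilsen's proof is organized (equivariance of $\Ext_{\delta}$ and $\Ind_{\delta}$ under $\what\delta$, with the duality computation as the crux of (c)--(d)) is sound, but it is supplementary detail the paper does not reproduce.
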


Finally, we mention the main result of \cite{ladder}, where we established lattice isomorphisms similar to Nilsen, but for strongly invariant ideals.

\begin{thm}{\cite[Theorem~3.2]{ladder}} \label{thmid}
Let $(A,G,\alpha)$ and $(A,G,\delta)$ be an action and coaction respectively.  
\begin{enumerate}[\upshape (a)]
    \item The assignments $\I_{\alpha}(A) \to \I_{\what{\alpha}}^s(A \rtimes_{\alpha} G)$  and $\I_{\alpha}(A) \to \I_{\what{\alpha}^n}^s(A \rtimes_{\alpha,r} G)$ given by $I \mapsto I \rtimes_{\alpha} G$ and $I \mapsto I \rtimes_{\alpha,r} G$ are lattice isomorphisms respectively.
    \item If $\delta$ is maximal or normal, then the assignment $\I_{\delta}^s(A) \to \I_{\what{\delta}}(A \rtimes_{\delta} G)$ given by $I \mapsto I \rtimes_{\delta} G$ is a lattice isomorphism.
\end{enumerate}
\end{thm}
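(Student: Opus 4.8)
The plan is to treat both parts as instances of a single mechanism. Strong invariance of an ideal is exactly the assertion that it sits in a short exact sequence of (co)systems; crossed products turn such sequences into short exact sequences of crossed products, so that $I \rtimes G$ becomes an ideal whose inclusion into $A \rtimes G$ is equivariant for the dual (co)action; and the ambient crossed-product duality lets one run the construction backwards. Accordingly, in each part I would first show the forward map $I \mapsto I \rtimes G$ lands in the claimed lattice and is order-preserving, and then obtain bijectivity by crossing a second time and invoking a duality isomorphism (Imai--Takai for actions, and the map $\Phi$ of \thmref{thm3} for coactions), under which crossing twice is naturally stabilization by $\K$. Since $I \mapsto I \otimes \K$ is itself a lattice isomorphism of $\I(A)$ onto $\I(A \otimes \K)$, chasing an ideal around the resulting square returns it unchanged, which is what forces the two constructions to be mutually inverse.

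For part (a), if $I$ is $\alpha$-invariant then $\alpha$ restricts to an action on $I$, and because the full crossed-product functor for actions is exact, $0 \to I \rtimes_\alpha G \to A \rtimes_\alpha G \to (A/I)\rtimes_\alpha G \to 0$ is exact; thus $I \rtimes_\alpha G$ is an ideal and the inclusion is $\what\alpha$-equivariant, so $\what\alpha$ restricts to a coaction on it, i.e. $I \rtimes_\alpha G \in \I^s_{\what\alpha}(A \rtimes_\alpha G)$. For the inverse I would take a strongly $\what\alpha$-invariant ideal $J$, form $J \rtimes_{\what\alpha} G \subseteq A \rtimes_\alpha G \rtimes_{\what\alpha} G \cong A \otimes \K$ via Imai--Takai, and use that every ideal of $A \otimes \K$ has the form $I \otimes \K$ to identify a unique $I \in \I(A)$, which one checks is $\alpha$-invariant with $I \rtimes_\alpha G = J$. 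Order-preservation in both directions is routine. The reduced statement is handled the same way after passing to regular images, so that $\what\alpha$ is replaced by its normalization $\what\alpha^n$ on $A \rtimes_{\alpha,r} G$ and Imai--Takai by the corresponding reduced duality.

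For part (b), strong $\delta$-invariance of $I$ means $\delta|_I$ is a coaction, and since strong invariance implies weak invariance it also supplies the quotient coaction $\o{\delta}$ on $A/I$, giving a short exact sequence of coactions $0 \to (I,\delta|_I) \to (A,\delta) \to (A/I,\o{\delta}) \to 0$. Crossing with $G$ produces $I \rtimes_\delta G$ as an ideal of $A \rtimes_\delta G$ whose inclusion is equivariant for the dual actions, so $I \rtimes_\delta G \in \I_{\what\delta}(A \rtimes_\delta G)$. For the inverse, take a $\what\delta$-invariant ideal $J$, form $J \rtimes_{\what\delta} G$ inside the double crossed product, and transport it through the duality isomorphism: when $\delta$ is maximal this is $\Phi_A$ of \thmref{thm3}, an isomorphism onto $A \otimes \K$, and when $\delta$ is normal one uses instead the reduced bidual together with $\Lambda$ and $\Upsilon$ from the same diagram, where $\ker\Phi = \ker\Lambda$ makes the reduced picture faithful. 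In either case $J \rtimes_{\what\delta} G$ corresponds to $I \otimes \K$ for a unique $I \in \I(A)$ with $I \rtimes_\delta G = J$.

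The main obstacle is the surjectivity half of part (b): one must verify that the ideal $I$ produced by transporting $J$ through the duality is genuinely \emph{strongly} $\delta$-invariant rather than merely weakly invariant, and that $J = I \rtimes_\delta G$ on the nose. This amounts to tracking the condition ``$\delta|_I$ is a coaction'' across the isomorphism with $A \otimes \K$, where it becomes the statement that the stabilized coaction restricts to a genuine coaction on $I \otimes \K$; checking that this restriction lands in the correct multiplier algebra and remains nondegenerate is precisely where the maximality or normality of $\delta$ is indispensable, since it is this hypothesis that upgrades the Imai--Takai-type map to the honest isomorphism ($\Phi_A$, respectively the reduced $\Lambda/\Upsilon$ picture) needed to carry strong invariance back and forth. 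For coactions that are neither maximal nor normal the bidual map fails to be an isomorphism and the recovered ideal need not be strongly invariant, consistent with the hypothesis being essential.
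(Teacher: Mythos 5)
A preliminary caveat: the paper does not prove Theorem~\ref{thmid} at all --- it is imported verbatim from \cite[Theorem~3.2]{ladder}, so there is no in-paper argument to compare against. That said, your sketch is essentially a reconstruction of the method of that reference (the ``ladder technique'' its title advertises): cross once to get the forward map, then cross a second time and use a duality isomorphism (Imai--Takai for actions, $\Phi_A$ for maximal coactions, the $\Lambda/\Upsilon$ picture of \thmref{thm3} for normal ones) to identify the iterated crossed product with $A \otimes \K$, together with the fact that every ideal of $A \otimes \K$ has the form $I \otimes \K$. Your placement of the maximal/normal hypothesis --- it is what makes the bidual map an isomorphism in the first place --- is also correct.

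The step you treat too lightly is the verification that the two constructions are mutually inverse; a single square is not enough. The computation $\Phi\bigl((I\rtimes G)\rtimes G\bigr)=I\otimes\K$ (which uses strong invariance, i.e.\ nondegeneracy of $\delta|_I$, in an essential way) gives only one composition: recovering an ideal from $I\rtimes G$ returns $I$. For the other composition --- that the ideal $I$ recovered from an invariant $J$ satisfies $I\rtimes G=J$ --- what you actually obtain is $(I\rtimes G)\rtimes G=J\rtimes G$ inside the double crossed product, and you must still prove that $J\mapsto J\rtimes G$ is \emph{injective} on invariant ideals of $A\rtimes G$. That injectivity is not formal: it is obtained by running the same stabilization argument one level higher up the tower of iterated crossed products, using that dual coactions on full crossed products are automatically maximal and dual coactions on reduced crossed products automatically normal (so that duality is again available for the dual system; note that for part (b) this second square is an instance of the part (a) machinery, and vice versa). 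This overlapping of squares is precisely why the technique is a ``ladder'' rather than the single square your sketch chases around; with that extra rung supplied, your outline matches the cited proof. One further refinement in part (b): the lemma that $I\otimes\K$ is strongly invariant for the stabilized coaction if and only if $I$ is strongly $\delta$-invariant needs no maximality or normality --- those hypotheses are spent entirely on having the duality isomorphism, not on transporting strong invariance across it.
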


Note that by $I \rtimes_{\alpha} G$, and $I \rtimes_{\delta} G$, we mean the isomorphic image of $I \rtimes_{\alpha|_{I}} G$ and $I \rtimes_{\delta|_{I}} G$ under the canonical inclusion $\iota: I \hookrightarrow A$ after one applies the full crossed product functor $\cdot \rtimes_{\alpha|_{I}} G$ and $\cdot \rtimes_{\delta|_{I}} G$ to $\iota$ respectively.  The same is true for $I \rtimes_{\alpha,r} G$ using the reduced crossed product functor.  In any case we will denote this by $\iota \rtimes G$ omitting the action or coaction.  These functors preserve equivariance of actions and coactions, see \cite[Remarks above Theorem~2.1]{ladder} and \cite[Lemma~A.16]{enchilada}.

\section{Comparing Strong and Nilsen Invariance}\label{comparing}
There are multiple relationships between the different notions of invariance given in definition \ref{definv}.  The story for actions is short and simple.  In particular, Nilsen's definition of invariance for an action $(A,G,\alpha)$ is equivalent to $\alpha$-invariance.

\begin{lem} \label{lem1}
Given an action $(A,G,\alpha)$, we have $\I_{\alpha}(A) = \Inn_{\alpha}(A)$.
\end{lem}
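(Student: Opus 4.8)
The plan is to convert the Nilsen condition $I = \ker(\Ind_{\alpha} q \circ i_A)$ into an explicit description of the ideal and then compare it directly with ordinary $\alpha$-invariance; since $\I_{\alpha}(A)$ and $\Inn_{\alpha}(A)$ are both sublattices of $\I(A)$, it suffices to prove they coincide as sets. First I would unwind the induced regular homomorphism: by the covariance relation defining $\Ind_{\alpha} q$ we have
\[
\Ind_{\alpha} q \circ i_A = (q \otimes M^{-1}) \circ \oalpha,
\]
so the Nilsen condition becomes $I = \ker\big((q \otimes M^{-1}) \circ \oalpha\big)$. Because $f \mapsto f((\cdot)^{-1})$ is an automorphism of $C_0(G)$ and $M$ is faithful, $M^{-1}$ is a faithful nondegenerate representation of $C_0(G)$; tensoring with the identity preserves injectivity on the minimal tensor product, and a faithful nondegenerate representation stays injective on multipliers. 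Factoring $(q \otimes M^{-1}) \circ \oalpha = (\id \otimes M^{-1}) \circ (q \otimes \id_{C_0(G)}) \circ \oalpha$ then gives
\[
\ker\big((q \otimes M^{-1}) \circ \oalpha\big) = \ker\big((q \otimes \id_{C_0(G)}) \circ \oalpha\big).
\]

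Next I would evaluate this kernel pointwise. Viewing $\oalpha(a) \in C_b(G,A) \subset M(A \otimes C_0(G))$ as the function $s \mapsto \alpha_s(a)$, the composite $(q \otimes \id_{C_0(G)}) \circ \oalpha$ sends $a$ to the function $s \mapsto q(\alpha_s(a))$ in $C_b(G,A/I)$, which vanishes exactly when $\alpha_s(a) \in I$ for every $s \in G$. Hence
\[
\ker(\Ind_{\alpha} q \circ i_A) = J \coloneqq \{a \in A : \alpha_s(a) \in I \text{ for all } s \in G\},
\]
so $I \in \Inn_{\alpha}(A)$ if and only if $I = J$.

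The comparison with $\alpha$-invariance then follows from two elementary observations about $J$. Taking $s = e$ shows $J \subseteq I$ unconditionally, while the group law $\alpha_s \circ \alpha_t = \alpha_{st}$ shows $J$ is always $\alpha$-invariant: if $a \in J$ and $t \in G$ then $\alpha_s(\alpha_t(a)) = \alpha_{st}(a) \in I$ for all $s$, so $\alpha_t(a) \in J$. Consequently, if $I$ is $\alpha$-invariant then $I \subseteq J$, whence $I = J$ and $I \in \Inn_{\alpha}(A)$; conversely, if $I \in \Inn_{\alpha}(A)$ then $I = J$ is $\alpha$-invariant. This yields both inclusions and hence the equality $\I_{\alpha}(A) = \Inn_{\alpha}(A)$.

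I expect the only genuinely delicate step to be the passage through the multiplier algebra in the kernel computation, namely justifying that replacing $M^{-1}$ by the identity leaves the kernel unchanged and that membership in $\ker\big((q \otimes \id_{C_0(G)}) \circ \oalpha\big)$ is equivalent to pointwise vanishing of $s \mapsto q(\alpha_s(a))$. Once $J$ is identified, the remaining argument is purely formal, the crux being the $s = e$ trick together with the automatic $\alpha$-invariance of $J$.
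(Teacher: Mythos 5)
Your argument is correct and takes essentially the same approach as the paper: both identify $\ker(\Ind_{\alpha} q \circ i_A)$ with $\{a \in A : \alpha_s(a) \in I \text{ for all } s \in G\}$ by evaluating $\oalpha(a)$ pointwise over $G$, then get one inclusion from evaluation at $s=e$ and the other from the group law. The only difference is packaging: the paper computes concretely with vectors in $L^2(G,\mathcal{H})$ for a representation $\pi$ with $\ker\pi = I$, which yields $\pi(\alpha_{s^{-1}}(a))\xi(s)$ directly and sidesteps your multiplier-algebra digression, whereas you dispose of $M^{-1}$ via injectivity of multiplier extensions of faithful nondegenerate maps and read off vanishing in $C_b(G,A/I)$ --- a step that is indeed routine, as you anticipated.
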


\begin{proof}
Let $\pi: A \to \mathcal{B}(\H)$ be a $*$-representation with $I = \ker \pi$.  Suppose first that $I \in \I_{\alpha}(A)$ so that $\alpha_s(I) \subset I$ for all $s \in G$.  Let $a \in I$.  Observe that $(\pi \otimes M^{-1}) \circ \alpha: A \to \mathcal{B}(\H \otimes L^2(G))  \cong \mathcal{B}(L^2(G,\H))$. Then for $s \in G$, and $\xi \in L^2(G,\H)$ \begin{align*}
(\pi \otimes M^{-1}) \circ \alpha(a)\xi(s) &= \pi(\alpha_{s^{-1}}(a))\xi(s) \\ & \in \pi(I)\xi(s) \\ &=\{0\}
\end{align*} so that $\pi \otimes M^{-1} \circ \alpha(a) = 0$ and hence $a \in \ker((\pi \otimes M^{-1}) \circ \alpha)$.  On the other hand if $a \in \ker((\pi \otimes M^{-1}) \circ \alpha)$, then for any $h \in \H$ with $\xi \in L^2(G,\H)$ and $\xi(e)=h$, we have \begin{align*}
0 &= (\pi \otimes M^{-1}) \circ \alpha(a)\xi(e) \\ &= \pi(\alpha_e(a))h \\ &= \pi(a)h.
\end{align*}
Hence $a \in \ker \pi$ so that $\ker \pi = \ker((\pi \otimes M^{-1}) \circ \alpha)$, implying that $I \in \Inn_{\alpha}(A)$.  Now suppose that $I \in \Inn_{\alpha}(A)$ and let $a \in I$ with $s \in G$.  It suffices to show $\pi(\alpha_{s^{-1}}(a))=0$.  Since $I = \ker((\pi \otimes M^{-1}) \circ \alpha)$ we have for any $h \in \H$ and fixed $s \in G$, one can pick $\xi \in L^2(G,\H)$ such that $\xi(s)=h$ so that \begin{align*}
0 &= (\pi \otimes M^{-1}) \circ \alpha(a)\xi(s) \\ &= \pi(\alpha_{s^{-1}}(a))h.
\end{align*}  This forces $\alpha_{s^{-1}}(a) \in \ker \pi = I$ for each $s \in G$.  Hence $I \in \I_{\alpha}(A)$, proving the equality.

\end{proof}

Thus for actions we will denote $\I_{\alpha}^N(A)$ as just $\I_{\alpha}(A)$. Characterizing when strong and Nilsen invariance are equivalent is a bit trickier.  To do so, we require a few lemmas. 

\begin{lem} \label{lemext}
    Let $(A,G,\delta)$ be a coaction and let $I \in \I^s$.  Then $\Ext_{\delta}(I) = I \rtimes_{\delta} G$
\end{lem}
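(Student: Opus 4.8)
The plan is to work entirely inside $B \coloneqq A \rtimes_{\delta} G$ and to identify both $\Ext_{\delta}(I)$ and $I \rtimes_{\delta} G$ with the closed span of the products $j_A(a) j_G(f)$ for $a \in I$, $f \in C_0(G)$. First I would record the structural facts I intend to use: the standard spanning description $B = \o\spn\{j_A(b) j_G(g) : b \in A,\ g \in C_0(G)\}$, nondegeneracy of $j_G$, and that $\Ext_{\delta}(I) = \o\spn\{B j_A(I) B\}$ is the closed two-sided ideal of $B$ generated by $j_A(I)$ (all recalled in the preliminaries or standard, cf. \cite{enchilada}). Since $I$ is strongly $\delta$-invariant, $\delta|_I$ is genuinely a coaction on $I$, so $I \rtimes_{\delta} G$ is defined as the injective image of $\iota \rtimes G \colon I \rtimes_{\delta|_I} G \to B$; by functoriality this map intertwines the canonical maps, so its image is $K \coloneqq \o\spn\{j_A(a) j_G(f) : a \in I,\ f \in C_0(G)\}$.

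The device that makes the argument go through cleanly is the following observation. Because $K$ is the image of a $*$-homomorphism it is a $C^*$-subalgebra of $B$, hence self-adjoint; taking adjoints of its spanning elements, $(j_A(a) j_G(f))^* = j_G(\bar f) j_A(a^*)$ with $a^* \in I$, shows that equally $K = \o\spn\{j_G(g) j_A(a) : a \in I,\ g \in C_0(G)\}$. Having both descriptions available is what lets me avoid explicit use of the (twisted) covariance relation that governs moving a $j_G$ past a $j_A$.

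For $K \subseteq \Ext_{\delta}(I)$ I would use that $\Ext_{\delta}(I)$, being a closed two-sided ideal of $B$, is an $M(B)$-submodule; since $j_A(I) \subseteq \Ext_{\delta}(I)$ and $j_G(C_0(G)) \subseteq M(B)$, it absorbs every $j_A(a) j_G(f)$ and hence contains $K$. For the reverse inclusion it suffices to show that $K$ is a closed two-sided ideal of $B$ containing $j_A(I)$, since $\Ext_{\delta}(I)$ is the smallest such ideal. Containment of $j_A(I)$ comes from nondegeneracy of $j_G$: $j_A(a) = \lim_{\lambda} j_A(a) j_G(e_{\lambda})$ for an approximate unit $(e_{\lambda})$ of $C_0(G)$, with each term in $K$. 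That $K$ is an ideal is where I use the two descriptions together with $I$ being an ideal of $A$: left multiplication by $j_A(b)$ preserves $K$ via $j_A(b) j_A(a) j_G(f) = j_A(ba) j_G(f)$ and $ba \in I$; left multiplication by $j_G(g)$ preserves $K$ via the second description, $j_G(g) j_G(g') j_A(a) = j_G(gg') j_A(a)$; and symmetrically right multiplication by $j_G(g)$ and by $j_A(b)$ each preserve $K$, the latter via $j_G(g) j_A(a) j_A(b) = j_G(g) j_A(ab)$ with $ab \in I$. Since every element of $B$ is a limit of sums of products $j_A(b) j_G(g)$, factoring such a product and applying these one-sided statements in turn yields $B K \subseteq K$ and $K B \subseteq K$.

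The main obstacle is precisely the ideal property of $K$, which cannot be read off naively from a single spanning description because commuting $j_G$ and $j_A$ is nontrivial; the self-adjointness of $K$ circumvents this, since at each step I may choose whichever description places the relevant $j_A$-factor where multiplication by $A$ can be absorbed into $I$. The points I would be most careful to verify are the standing structural claims I invoke — that $\iota \rtimes G$ is injective with the stated image and intertwines the canonical maps, and that $B = \o\spn\{j_A(A) j_G(C_0(G))\}$ — but these are exactly the functoriality and spanning facts available from the preliminaries. Notably, strong invariance is used only to make $I \rtimes_{\delta|_I} G$ and $\iota \rtimes G$ meaningful; once $K$ is presented as a self-adjoint span, the ideal property rests solely on $I \trianglelefteq A$.
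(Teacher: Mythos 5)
Your architecture --- identifying $I \rtimes_{\delta} G$ with $K = \o\spn\{j_A(I)j_G(C_0(G))\}$ via functoriality, using self-adjointness of $K$ to obtain the second description $\o\spn\{j_G(C_0(G))j_A(I)\}$, and exploiting the two descriptions to absorb factors without invoking covariance relations --- is exactly the mechanism of the paper's proof, and your verification that $K$ is a closed two-sided ideal of $B = A\rtimes_{\delta}G$ is correct. However, both of your inclusions rest on a false premise. You assert $j_A(I)\subseteq \Ext_{\delta}(I)$, and, for the reverse direction, that $j_A(a)=\lim_{\lambda}j_A(a)j_G(e_{\lambda})$ exhibits $j_A(a)$ as a \emph{norm} limit of elements of $K$, so that $\Ext_{\delta}(I)$ is ``the smallest closed ideal containing $j_A(I)$'' and hence lies inside $K$. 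But $j_A$ maps $A$ into $M(B)$, not into $B$: nondegeneracy of $j_G$ gives only strict convergence $j_A(a)j_G(e_{\lambda})\to j_A(a)$, which cannot be norm convergence unless $j_A(a)$ already lies in $B$. Concretely, for the trivial coaction $\delta(a)=a\otimes 1$ with $G$ non-compact, every ideal of $A$ is strongly invariant, $B\cong A\otimes C_0(G)$, and $j_A(a)=a\otimes 1\notin B$ for $a\neq 0$; thus $j_A(I)$ is contained in no closed ideal of $B$ at all, and ``the smallest such ideal'' has no referent. As written, both directions of your argument therefore break at the point where $j_A(I)$ is placed inside a subset of $B$.

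The gap is repairable without changing your structure, precisely because $\Ext_{\delta}(I)$ is by definition $\o\spn\{Bj_A(I)B\}$, the closed ideal generated by the subset $Bj_A(I)B$ of $B$, not by $j_A(I)$ itself. For $\Ext_{\delta}(I)\subseteq K$: given $b,c\in B$ and $a\in I$, approximate $c$ in norm by sums of products $j_A(x)j_G(g)$; then $bj_A(a)\cdot j_A(x)j_G(g) = b\cdot\bigl(j_A(ax)j_G(g)\bigr)\in BK\subseteq K$ by your (correct) ideal property, and passing to the limit gives $Bj_A(I)B\subseteq K$. For $K\subseteq\Ext_{\delta}(I)$: factor $a=a_1a_2a_3$ with $a_i\in I$, take an approximate unit $(u_{\mu})$ of $B$, and note that $j_A(a)j_G(f)=\lim_{\mu}\bigl(j_A(a_1)u_{\mu}\bigr)\,j_A(a_2)\,\bigl(j_A(a_3)j_G(f)\bigr)$ in norm, with every term in $Bj_A(I)B$. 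For comparison, the paper avoids this bookkeeping entirely: it runs a single chain of closed-span identities for $\o\spn\{(A\rtimes_{\delta}G)j_A(I)(A\rtimes_{\delta}G)\}$, swapping between the two descriptions of $K$ to absorb one factor at a time, so that no element of $j_A(I)$ ever needs to be located inside $B$; your ideal-property computation is the same algebra, but your packaging of it around ``the ideal generated by $j_A(I)$'' is what introduces the error.
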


\begin{proof}
Observe that since $I \in \I^s$, we have that $I \rtimes_{\delta|_{I}} G$ makes sense so that $I \rtimes_{\delta} G$ is a $\cst$-subalgebra of $A \rtimes_{\delta} G$.  In particular it's given by \begin{align*}
(\iota \rtimes G)(I \rtimes_{\delta|_{I}} G) &= (\iota \rtimes G)\left(\o\spn \left\{j^{\delta|_{I}}_I(I)j^{\delta|_{I}}_G(C_0(G))\right\} \right) \\ &= (j_A^{\delta} \circ \iota \rtimes j_G^{\delta})\left( \o\spn \left\{j^{\delta|_{I}}_I(I)j^{\delta|_{I}}_G(C_0(G))\right\} \right) \\ &= \o\spn \left\{j_A^{\delta}(I)j_G^{\delta}(C_0(G)) \right\}.
\end{align*}  In particular by $*$-invariance we have that \begin{align*}
I \rtimes_{\delta} G &= \o\spn \left\{j_A^{\delta}(I)j_G^{\delta}(C_0(G)) \right\} \\ &= \o\spn \left\{ j_G^{\delta}(C_0(G))j_A^{\delta}(I)\right\}.
\end{align*}
Thus, we compute to see that
\begin{align*}
\Ext_{\delta}(I) 
&= \o\spn \{ (A \rtimes_{\delta} G)j_A(I)(A \rtimes_{\delta} G) \}
\\ &= \o\spn \left\{j_A^{\delta}(A)j_G^{\delta}(C_0(G))j_A^{\delta}(I)j_A^{\delta}(A)j_G^{\delta}(C_0(G)) \right\} 
\\ &= \o\spn \left\{j_A^{\delta}(A)j_G^{\delta}(C_0(G))j_A^{\delta}(I)j_G^{\delta}(C_0(G)) \right\} 
\\ &= \o\spn \left\{j_A^{\delta}(A)j_G^{\delta}(C_0(G)) \o\spn \left\{j_A^{\delta}(I)j_G^{\delta}(C_0(G))\right\} \right\} 
\\ &= \o\spn \left\{j_A^{\delta}(A)j_G^{\delta}(C_0(G)) \o\spn \left\{j_G^{\delta}(C_0(G))j_A^{\delta}(I) \right\} \right\} 
\\ &= \o\spn \left\{j_A^{\delta}(A)j_G^{\delta}(C_0(G)) j_G^{\delta}(C_0(G))j_A^{\delta}(I) \right\} 
\\ &= \o\spn \left\{j_A^{\delta}(A)j_G^{\delta}(C_0(G))j_A^{\delta}(I) \right\} 
\\ &= \o\spn \left\{j_A^{\delta}(A) \o\spn \left\{j_G^{\delta}(C_0(G))j_A^{\delta}(I) \right\} \right\} \\ &= \o\spn \left\{j_A^{\delta}(A) \o\spn \left\{j_A^{\delta}(I)j_G^{\delta}(C_0(G)) \right\} \right\} \\ &= \o\spn \left\{j_A^{\delta}(A) j_A^{\delta}(I)j_G^{\delta}(C_0(G)) \right\} 
\\ &= \o\spn \left\{j_A^{\delta}(I)j_G^{\delta}(C_0(G)) \right\} 
\\ &= I \rtimes_{\delta} G.
\end{align*}
\end{proof}

\begin{lem}\label{strong then nielsen}
If $(A,G,\delta)$ is a coaction and 
$\I^s\subset \I^N$, then $\delta$ is normal and $\I^s=\I^N$.
\end{lem}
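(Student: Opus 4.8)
The plan is to prove the two conclusions in order: first that $\delta$ is normal, and then, with normality in hand, to show that the inclusion $\I^s\subset\I^N$ is forced to be an equality. The hypothesis $\I^s\subset\I^N$ will be used in both steps.

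For normality I would test the hypothesis on the zero ideal. The ideal $\{0\}$ is strongly $\delta$-invariant, since $\delta$ restricts to the (trivial) coaction on the zero algebra, so $\{0\}\in\I^s$; by hypothesis $\{0\}\in\I^N$. Now I unwind \defnref{definv}(e) at $I=\{0\}$: the quotient map is $q=\id_A$, and in the realization $j_A=(\id_A\otimes\lambda)\circ\delta$ one has $\Ind_\delta\id_A\circ j_A=(\id_A\otimes\lambda)\circ\delta=j_A$. Hence Nilsen invariance of $\{0\}$ reads exactly $\{0\}=\ker j_A$, that is, $j_A$ is faithful, which is the definition of $\delta$ being normal. (Indeed, within the paper's conventions, $\{0\}\in\I^N$ is \emph{equivalent} to normality.)

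For the equality it suffices to prove $\I^N\subset\I^s$, and here I would chain the two lattice isomorphisms that are now available. By \thmref{thm2}(c), $\Res_\delta$ restricts to a lattice isomorphism $\I_{\what\delta}(A\rtimes_\delta G)\to\I^N$; and since $\delta$ is now known to be normal, \thmref{thmid}(b) makes $K\mapsto K\rtimes_\delta G$ a lattice isomorphism $\I^s\to\I_{\what\delta}(A\rtimes_\delta G)$. So given $I\in\I^N$, there is $K\in\I^s$ with $I=\Res_\delta(K\rtimes_\delta G)$. By \lemref{lemext}, $K\rtimes_\delta G=\Ext_\delta(K)$, so $I=\Res_\delta\circ\Ext_\delta(K)$; and because $K\in\I^s\subset\I^N$, \thmref{thm2}(a) yields $\Res_\delta\circ\Ext_\delta(K)=K$. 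Thus $I=K\in\I^s$, proving $\I^N\subset\I^s$ and hence $\I^s=\I^N$.

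I expect the delicate point to be the correct use of the hypothesis, which enters exactly where the argument could otherwise fail: once to place $\{0\}$ in $\I^N$ (giving normality), and more subtly to guarantee that the ideal $K$ produced by the two isomorphisms lies in $\I^N$, so that \thmref{thm2}(a)(ii) applies and shows $\Res_\delta\circ\Ext_\delta$ fixes it. Without $\I^s\subset\I^N$ one would only know the composite of the two isomorphisms is \emph{some} bijection $\I^s\to\I^N$, not that it sends each $K$ to $K$. I would also check carefully the bookkeeping identity $\Ind_\delta\id_A\circ j_A=j_A$ under the chosen realization $j_A=(\id_A\otimes\lambda)\circ\delta$, and confirm that the zero ideal is admissible as a (trivially) strongly invariant ideal.
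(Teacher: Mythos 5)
Your proof is correct and takes essentially the same approach as the paper's: normality is extracted from $\{0\}\in\I^s\subset\I^N$ exactly as in the paper, and the reverse inclusion $\I^N\subset\I^s$ is obtained by the same combination of \thmref{thmid}(b), \lemref{lemext}, and \thmref{thm2}, with the hypothesis $\I^s\subset\I^N$ applied to the strongly invariant ideal so that $\Res_\delta\circ\Ext_\delta$ fixes it. The only cosmetic difference is that the paper produces its ideal $J$ as the preimage of $\Ind_\delta(I)$ under $K\mapsto K\rtimes_\delta G$ and then uses $\Res_\delta\circ\Ind_\delta(I)=I$, whereas you invoke the surjectivity of $\Res_\delta$ onto $\I^N$ from \thmref{thm2}(c); these are interchangeable.
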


\begin{proof}
Since $\{0\} \in \I^s$, we conclude that it's also Nilsen invariant so that \begin{align*}
\{0\} 
&= \ker \id_A 
\\ &= \ker  \bigl( (\Ind_{\delta}  \id_A) \circ j_A \bigr)
\\ &= \ker  j_A.
\end{align*} 
Thus $\delta$ is normal.  Now, let $I \in \I^N$.  Then $\Ind_{\delta}(I) \in \I_{\what\delta}(A \rtimes_{\delta} G)$ so that $\Ind_{\delta}(I) = J \rtimes_{\delta} G$ for some $J \in \I^s \subset \I^N$ by \thmref{thmid}.  Hence $J \rtimes_{\delta} G = \Ext_{\delta}(J)$ by \lemref{lemext}.  However then \begin{align*}
I &= \Res_{\delta} \circ \Ind_{\delta}(I) \\ &= \Res_{\delta}(J \rtimes_{\delta} G) \\ &= \Res_{\delta} \circ \Ext_{\delta}(J) \\ &= J
\end{align*} by \thmref{thm2} so that $I=J \in \I^s$ and hence that $\I^s = \I^N$.   
\end{proof}

\begin{lem} \label{lem4}
Let $(A,G,\delta)$ be a coaction and $I \in \I^w$.  Then $I \in \I^N$ if and only if $\delta^I$ is normal.
\end{lem}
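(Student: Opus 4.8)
The plan is to translate Nilsen $\delta$-invariance of $I$ into a statement about the injectivity of the canonical map attached to the quotient coaction $\delta^I$, since injectivity of that map is by definition normality of $\delta^I$. Because $I \in \I^w$, weak invariance supplies the coaction $\delta^I$ on $A/I$ with $\delta^I \circ q = (q \otimes \id_{\cst(G)}) \circ \delta$, using the commuting square recalled after \defnref{definv}. I would fix the canonical map $j_{A/I}^{\delta^I} \coloneqq (\id_{A/I} \otimes \lambda) \circ \delta^I$ for this coaction, so that normality of $\delta^I$ means exactly that $j_{A/I}^{\delta^I}$ is faithful.

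First I would unwind the kernel defining $\I^N$. By the lemma introducing $\Ind_{\delta}$, the homomorphism $\Ind_{\delta} q$ is the integrated form of the covariant pair $((q \otimes \lambda) \circ \delta, 1_{A/I} \otimes M)$, so its universal property gives $(\Ind_{\delta} q) \circ j_A = (q \otimes \lambda) \circ \delta$, where we use $j_A = (\id_A \otimes \lambda) \circ \delta$. Hence $\ker\bigl((\Ind_{\delta} q) \circ j_A\bigr) = \ker\bigl((q \otimes \lambda) \circ \delta\bigr)$, and $I \in \I^N$ means precisely that this kernel equals $I$.

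The key identification is $(q \otimes \lambda) \circ \delta = j_{A/I}^{\delta^I} \circ q$. I would obtain it by factoring $q \otimes \lambda = (\id_{A/I} \otimes \lambda) \circ (q \otimes \id_{\cst(G)})$ and applying the defining square for $\delta^I$, namely $(\id_{A/I} \otimes \lambda) \circ (q \otimes \id_{\cst(G)}) \circ \delta = (\id_{A/I} \otimes \lambda) \circ \delta^I \circ q = j_{A/I}^{\delta^I} \circ q$; passing to the nondegenerate extensions of these maps to the multiplier algebras is routine since every map in sight is nondegenerate. With this in hand, $I \in \I^N$ if and only if $\ker\bigl(j_{A/I}^{\delta^I} \circ q\bigr) = I$. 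Since $q \colon A \to A/I$ is a surjection with kernel $I$, we have $\ker\bigl(j_{A/I}^{\delta^I} \circ q\bigr) = q^{-1}\bigl(\ker j_{A/I}^{\delta^I}\bigr) \supseteq I$ automatically, and this preimage equals $I = q^{-1}(\{0\})$ if and only if $\ker j_{A/I}^{\delta^I} = \{0\}$, i.e. if and only if $j_{A/I}^{\delta^I}$ is faithful. That is exactly normality of $\delta^I$, which completes both implications.

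I expect the only real care is in the middle two steps: confirming the universal-property computation $(\Ind_{\delta} q) \circ j_A = (q \otimes \lambda) \circ \delta$ and the multiplier-level identity $(\id_{A/I} \otimes \lambda) \circ (q \otimes \id_{\cst(G)}) \circ \delta = j_{A/I}^{\delta^I} \circ q$. Once these are verified, the final surjection argument relating $\ker\bigl(j_{A/I}^{\delta^I}\circ q\bigr)$ to $I$ is immediate from the ideal correspondence under $q$.
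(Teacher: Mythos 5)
Your proposal is correct and follows essentially the same route as the paper's proof: both hinge on the identity $j_{A/I}^{\delta^I} \circ q = (q \otimes \lambda) \circ \delta$ obtained from the defining square for $\delta^I$, and then compare $\ker\bigl(j_{A/I}^{\delta^I} \circ q\bigr)$ with $I = \ker q$. The only difference is organizational—you phrase it as a single chain of equivalences via the preimage $q^{-1}\bigl(\ker j_{A/I}^{\delta^I}\bigr)$, while the paper runs the two implications separately—but the mathematical content is identical.
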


\begin{proof}
First suppose that $\ker q = I \in \I^N$ for $q: A \to A/I$ the canonical quotient map.  Then $I \in \I^w$ so that $\delta^I$ is a well-defined coaction of $G$ on $A/I$.  We want to prove $\jai  = (\id_{A/I} \otimes \lambda) \circ \delta^I$ is faithful.  Indeed, suppose that $\jai (x) = 0$.  Then $x = q(a)$ for some $a \in A$.  However then \begin{align*}
a &\in \ker \jai  \circ q \\ &= \ker((\id_{A/I} \otimes \lambda) \circ \delta^I \circ q) \\ &= \ker((\id_{A/I} \otimes \lambda) \circ (q \otimes \id_{\cst(G)}) \circ \delta) \\ &= \ker((q \otimes \lambda) \circ \delta) \\ &= \ker q \\ &= I 
\end{align*} so that $a \in I$, where we used Nilsen invariance only in the fourth equality.  However then $x = q(a)=0$ proving $\jai $ is faithful.  Hence, $\delta^I$ is normal.  

Conversely, suppose $\delta^I$ is normal.  Since $\ker  \jai  \circ q = \ker((q \otimes \lambda) \circ \delta)$, it's obvious that $\ker  q \subset \ker((q \otimes \lambda) \circ \delta)$.  Thus, suppose that $a \in \ker((q \otimes \lambda) \circ \delta) = \ker(\jai  \circ q)$.  Then $\jai (q(a)) = 0$ which by faithfulness of $\jai $ forces $q(a)=0$, i.e that $a \in I = \ker  q$.  Thus, \begin{align*}
I &= \ker  q \\ &= \ker((q \otimes \lambda) \circ \delta)
\end{align*} implying that $I \in \I^N$.  
\end{proof}

This characterization of Nilsen invariance is highly important.  It's a well-known fact that for a maximal coaction $(A,G,\delta)$ and $I \in \I^s$ that $\delta^I$ is also maximal.  If $(A,G,\delta)$ and $I \in \I^s$, knowing when $I$ is Nilsen invariant is then equivalent to knowing when $\delta$ passes to a normal coaction $\delta^I$ on the quotient.  To this end, we characterize exactly when these two notions of invariance are equal.      

\begin{thm} \label{thm1}
For a coaction $(A,G,\delta)$ we have 
$\I^s=\I^N$
if and only if $\delta$ is normal and the sequence 
\[
\begin{tikzcd}[column sep=scriptsize]
0 \arrow[r] 
& {I \rtimes G \rtimes_r G} 
\arrow[r, "\iota \rtimes G \rtimes_r G"] 
& {\agrg} \arrow[r, "\qgrg"] 
& {A/I \rtimes G \rtimes_r G} \arrow[r] 
& 0
\end{tikzcd}
\]
is short exact for all $I \in \I^s$, where $q: A \to A/I$ is the canonical quotient and $\iota: I \hookrightarrow A$ is the canonical inclusion. 
\end{thm}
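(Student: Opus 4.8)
The plan is to collapse both implications onto the single question of \emph{when the quotient coaction $\delta^I$ is normal}, using the two lemmas just proved, and then to read off that normality from the exactness of the reduced double-crossed-product sequence via the natural isomorphism $\Upsilon$ of \thmref{thm3}. First I would record the reduction. By \lemref{strong then nielsen}, knowing $\I^s \subseteq \I^N$ already yields both that $\delta$ is normal and that $\I^s = \I^N$; conversely $\I^s = \I^N$ forces $\delta$ normal. By \lemref{lem4}, for $I \in \I^s \subseteq \I^w$ we have $I \in \I^N$ if and only if $\delta^I$ is normal. So the theorem reduces to: $\delta$ is normal and $\delta^I$ is normal for every $I \in \I^s$, if and only if $\delta$ is normal and the displayed sequence is short exact for every $I \in \I^s$.

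The central tool is the naturality of $\Upsilon$. Since $\Upsilon$ is characterized by $\Upsilon \circ \Lambda = (q \otimes \id)\circ \Phi$ (the square of \thmref{thm3}), and $\Phi$, $\Lambda$, and the normalization quotient $q \otimes \id$ are all natural for equivariant homomorphisms, applying $\Upsilon$ to the equivariant maps $\iota$ and $q$ produces commuting squares identifying $\iota \rtimes G \rtimes_r G$ with $\iota^n \otimes \id_\K$ and $\qgrg$ with $q^n \otimes \id_\K$, where $\iota^n \colon I^n \to A^n$ and $q^n \colon A^n \to (A/I)^n$ are the normalizations of $\iota$ and $q$. I would also note that if $\delta$ is normal then $\delta|_I$ is normal for each $I \in \I^s$: with $A^n = A$ the normalization of $\iota$ satisfies $\iota^n \circ \kappa_I = \iota$ for the normalization quotient $\kappa_I \colon I \to I^n$, so injectivity of $\iota$ forces $\kappa_I$ injective, i.e. $I^n = I$. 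Finally $q^n$ factors as the composite of $q$ with the normalization quotient $A/I \to (A/I)^n$, whose kernel is exactly $\ker \jai$, so $\ker q^n = q^{-1}(\ker \jai)$.

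For the forward direction I would assume $\I^s = \I^N$; then $\delta$ is normal by \lemref{strong then nielsen}, and for $I \in \I^s = \I^N$ both $\delta|_I$ and $\delta^I$ are normal (the latter by \lemref{lem4}). Hence $A^n = A$, $I^n = I$, $(A/I)^n = A/I$, $\iota^n = \iota$, $q^n = q$, and the three copies of $\Upsilon$ carry the sequence isomorphically onto the sequence $0 \to I \otimes \K \to A \otimes \K \to A/I \otimes \K \to 0$, which is exact because $0 \to I \to A \to A/I \to 0$ is exact and $-\otimes \K$ preserves exactness; thus the original sequence is short exact. For the converse I would assume $\delta$ normal and the sequence exact for every $I \in \I^s$. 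Fixing such an $I$, normality of $\delta|_I$ gives $I^n = I$, so the naturality square for $\iota$ yields $\Upsilon_A(\ran(\iota \rtimes G \rtimes_r G)) = \iota^n(I^n) \otimes \K = I \otimes \K$, while the naturality square for $q$ yields $\Upsilon_A(\ker \qgrg) = \ker q^n \otimes \K$. Exactness equates the range and the kernel, so $\ker q^n \otimes \K = I \otimes \K$, whence $\ker q^n = I = \ker q$; since $q$ is surjective and $\ker q^n = q^{-1}(\ker \jai)$, this forces $\ker \jai = 0$, i.e. $\delta^I$ is normal, i.e. $I \in \I^N$. Therefore $\I^s \subseteq \I^N$, and \lemref{strong then nielsen} upgrades this to $\I^s = \I^N$.

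The step I expect to be the main obstacle is justifying the naturality of $\Upsilon$ and pinning down the action of the normalization functor on $\iota$ and $q$ — in particular the identities $\iota^n \circ \kappa_I = \iota$ and $q^n = (\text{quotient}) \circ q$, and the resulting claim that $\ran(\iota \rtimes G \rtimes_r G)$ corresponds under $\Upsilon_A$ to $\iota^n(I^n) \otimes \K$. Once these functorial compatibilities are in place, the remaining ingredients (exactness of $-\otimes \K$ and the kernel bookkeeping that extracts $\ker \jai = 0$ from $\ker q^n = \ker q$) are routine.
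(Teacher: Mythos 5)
Your proposal is correct, and it follows the same overall strategy as the paper: reduce via \lemref{strong then nielsen} and \lemref{lem4} to deciding when the quotient coaction $\delta^I$ is normal, then compare the reduced double-crossed-product sequence with the stabilized sequence $0 \to I\otimes\K \to A\otimes\K \to A/I\otimes\K \to 0$ through the isomorphism $\Upsilon$ of \thmref{thm3}, using exactness of $\K$. The difference is one of machinery rather than of route. The paper never invokes the normalization functor on morphisms: it builds the comparison isomorphisms ($\phi$, $\psi$, $\eta$, $\tau$) by hand and certifies the two crucial commuting diagrams by computations on generators --- in one direction showing that $(\psi\circ\phi\circ\eta^{-1})\circ\laig = \pai$, which gives $\ker\laig\subset\ker\pai$ and hence normality of $\delta^I$ via \thmref{thm3}; in the other, showing that the composite isomorphism $\eta$ is the tautological map $x+[I]\mapsto x+\ker$, which yields $\ker(\qgrg)\subset\igrg$. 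You compress all of this into a single naturality statement for $\Upsilon$ (derived from naturality of $\Lambda$, of $\Phi$, and of the normalization quotients) together with the bookkeeping identity $\ker q^n = q^{-1}(\ker\jai)$, which makes both implications formal and also isolates the cleaner local statement that, for normal $\delta$ and fixed $I\in\I^s$, exactness at $I$ is equivalent to normality of $\delta^I$. The trade-off, which you correctly flag as the main obstacle, is that naturality of $\Phi$ and of normalization with respect to the degenerate inclusion $\iota\colon I\hookrightarrow A$ and the quotient $q$ is precisely the content the paper establishes by its generator computations; a complete write-up of your argument would have to reproduce essentially those computations or supply citations (e.g.\ to \cite{enchilada}), so your version is tidier in organization but not shorter in substance.
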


\begin{proof}
First suppose that $\delta$ is normal and the exactness condition holds.
To show that $\I^s=\I^N$, by \lemref{strong then nielsen} it suffices to
let $I \in \I^s(A)$ and deduce that $I\in \I^N$. By \lemref{lem4} it suffices to show that the induced coaction $\delta^I$ on $A/I$ is normal.  By \thmref{thm3} this is equivalent to 
showing that $\ker \laig\subset \ker\pai$.

Since $\delta$ is normal, we have that $\delta_I$ is too since $\ia = j_A \circ \iota$.  In particular, we have isomorphisms
\begin{align*}
I\xt \K
&\overset{\Psi_I}\cong
\igrg
\\
A\xt \K
&\overset{\Psi_A}\cong
\agrg.
\end{align*}
Note by \cite{ladder} we know that 
$\igrg$\footnote{and recall that this means the natural image in $A\rtimes G\rtimes_r G$ of the reduced crossed product by $\what{\delta_I}$}
is an ideal of $\agrg$ and thus we obtain an isomorphism 
\[
\phi:  \agrg
\bigg/ \igrg
\to (A \otimes \K) \big/ (I \otimes \K)
\]
by $\phi(x + [I]) = \Psi_A\inv(x) + (I \otimes \K)$ where $[I]$ denotes the coset $\igrg$.  Since $\K$ is an exact $\cst$-algebra, we also get an isomorphism 
\[
\psi: (A \otimes \K) \big/ (I \otimes \K) \to A/I \otimes \K
\]
by $\psi(b + (I \otimes \K)) = (q \otimes \id_{\K})(b)$.  Thus, we obtain an isomorphism
\[
\agrg \bigg/ \igrg
\xrightarrow{\psi \circ \phi}
A/I \otimes \K.
\]
By the exactness condition, the sequence
\[
\begin{tikzcd}[column sep=scriptsize]
0 \arrow[r] 
& \igrg \arrow[r, "\iota \rtimes G \rtimes_r G"] 
& \agrg \arrow[r, "\qgrg"]
& \aigrg \arrow[r] 
& 0
\end{tikzcd}
\]
is short exact.  In particular, we obtain the obvious isomorphism
\[
\eta: \agrg
\bigg/ 
\igrg
\to 
\aigrg
\]
given by $\eta(x+[I]) = (q \rtimes G \rtimes_r G)(x)$.  
It suffices to show that the diagram
\begin{equation}\label{kernel}
\begin{tikzcd}
\aigrg \arrow[r,"\eta\inv","\cong"']
&\agrg\bigg/\igrg \arrow[d,"\psi\circ\phi","\cong"']
\\
\aigg \arrow[r,"\pai"'] \arrow[u,"\laig"]
&A/I\xt\K
\end{tikzcd}
\end{equation} 
commutes.
First
observe that since 
\[
\begin{tikzcd}
0 \arrow[r] 
& I \arrow[r, "\iota"] 
& A \arrow[r, "q"] 
& A/I \arrow[r] 
& 0
\end{tikzcd}
\]
is short exact and $I$ is strongly invariant, we have by \cite[Theorem~2.3]{ninv} that
\[
\begin{tikzcd}
0 \arrow[r] 
& I \rtimes G \arrow[r, "\iota \rtimes G"] 
& A \rtimes G \arrow[r, "q \rtimes G"] 
& A/I \rtimes G \arrow[r] 
& 0
\end{tikzcd}
\]
is short exact. Then note that $I\rtimes G$ is invariant under the dual action by \cite[Proposition 2.3(b)]{ladder}. Applying \cite[Proposition~3.19]{dana}, we have that 
\[
\begin{tikzcd}[column sep=scriptsize]
0 \arrow[r] 
& \igg \arrow[r, "\iota \rtimes G \rtimes G"] 
& \agg \arrow[r, "\qgg"] 
& \aigg \arrow[r] 
& 0
\end{tikzcd}
\] 
is short exact, and thus that 
\[
\aigg 
= (\qgg)\l(\agg\r).
\] 
Now, it's a basic fact that for action equivariant $*$-homomorphisms $f: A \to B$ with crossed product $f \rtimes G: A \rtimes G \to B \rtimes G$, that the following square commutes \[\begin{tikzcd}
A \rtimes G \arrow[dd, "\Lambda_A"'] \arrow[rr, "f \rtimes G"] &  & B \rtimes G \arrow[dd, "\Lambda_B"] \\
                                                               &  &     \\
A \rtimes_r G \arrow[rr, "f \rtimes_r G"']                     &  & B \rtimes_r G.                      
\end{tikzcd}\]  In fact, this is more or less what the proof of \cite[Lemma~A.16]{enchilada} states.  Thus, 
we have 
\begin{align*}
\iaig\circ \jai\circ q
&=(\qgg)\circ \iag\circ \ja
\\
\iaig\circ \jg
&=(\qgg)\circ \iag\circ \jg
\\
i_G^{\what{\delta^I}}
&=(\qgg)\circ i_G^{\what\delta}
\end{align*}
so we apply the regular representations,
remembering the rule
\[
\laig\circ (\qgg)=(\qgrg)\circ\lag,
\]
to get
\begin{align*}
\laig \circ \iaig \circ \jai\circ q
&=(\qgrg) \circ \lag \circ \iag\circ \ja
\\
\laig \circ \iaig\circ \jg
&=(\qgrg) \circ \lag \circ \iag\circ \jg
\\
\laig
\circ i_G^{\what{\delta^I}}
&=\laig
\circ (\qgg)\circ i_G^{\what\delta}
\\&=(\qgrg)
\circ \lag
\circ i_G^{\what\delta},
\end{align*}
so
for $a\in A$, $f\in C_0(G)$, and $y\in C^*(G)$,
applying $\psi\circ\phi\circ \eta^{-1}$ we get 
\begin{align*}
&(\psi\circ\phi\circ\eta^{-1})\circ \laig \circ \iaig\circ \jai \circ q(a)
\\&\quad=(\psi\circ\phi)\biggl(\lag\circ \iag\circ\ja(a)+[I]\biggr)
\\&\quad=\psi\biggl(\Psi_A\inv\circ \lag\circ \iag\circ\ja(a)+(I\xt\K)\biggr)
\\&\quad=\psi\biggl(\pa\circ \iag\circ\ja(a)+(I\xt\K)\biggr)
\\&\quad=(q\xt \id)\circ \pa\circ \iag\circ \ja(a)
\\&\quad=\pai\circ\iaig\circ \jai \circ q(a),
\end{align*}
\begin{align*}
&(\psi\circ\phi\circ\eta^{-1})\circ \laig \circ \iaig\circ \jg(f)
\\&\quad=(\psi\circ\phi)\biggl(\lag\circ \iag\circ\jg(f)+[I]\biggr)
\\&\quad=\psi\biggl(\Psi_A\inv\circ \lag\circ \iag\circ\jg(f)+(I\xt\K)\biggr)
\\&\quad=\psi\biggl(\pa\circ \iag\circ\jg(f)+(I\xt\K)\biggr)
\\&\quad=(q\xt \id)\circ \pa\circ \iag\circ \jg(f)
\\&\quad=\pai\circ\iaig\circ \jg(f),
\end{align*}
and
\begin{align*}
&(\psi\circ\phi\circ\eta^{-1})\circ \laig \circ i_G^{\what{\delta^I}}(y)
\\&\quad=(\psi\circ\phi)\biggl(\lag\circ i_G^{\what\delta}(y)+[I]\biggr)
\\&\quad=\psi\biggl(\Psi_A\inv\circ \lag\circ i_G^{\what\delta}(y)+(I\xt\K)\biggr)
\\&\quad=\psi\biggl(\pa\circ i_G^{\what\delta}(y)+(I\xt\K)\biggr)
\\&\quad=(q\xt \id)\circ \pa\circ i_G^{\what\delta}(y)
\\&\quad=\pai\circ i_G^{\what{\delta^I}}(y),
\end{align*}
thus verifying commutativity of 
\diagref{kernel}.

Conversely, suppose that $\I^s = \I^N$.  Then $\delta$ is normal by \lemref{lem4}. Now, let $I \in \I^s$.  We must show that
\[
\begin{tikzcd}[column sep=scriptsize]
0 \arrow[r] 
& \igrg
\arrow[r, "\iota \rtimes G \rtimes_r G"] 
& \agrg
\arrow[r, "\qgrg"] 
& \aigrg
\arrow[r] 
& 0
\end{tikzcd}
\]
is exact.
Since this sequence is a quotient of the short exact sequence
\[
\begin{tikzcd}[column sep=scriptsize]
0 \arrow[r] 
& \igg
\arrow[r, "\iota \rtimes G \rtimes G"] 
& \agg
\arrow[r, "\qgg"] 
& \aigg
\arrow[r] 
& 0,
\end{tikzcd}
\]
by routine diagram chasing we see that it is enough to show that
\begin{equation}\label{enough}
\ker \subset \igrg,
\end{equation}
where $\ker$ denotes the kernel of $\qgrg$.
Note that
equivariance of the regular representations implies that
$\qgrg$ maps $\agrg$ onto $\aigrg$.
In particular, we have the isomorphism
\[
\tau: \agrg
\bigg/ \ker 
\to \aigrg
\]
given
by $\tau(x + \ker) = \qgrg(x)$.
On the other hand since $\delta$ is normal, we 
have the isomorphisms
\begin{align*}
I\otimes \K
&\overset{\Psi_I}{\cong}
\igrg
\\
A \otimes \K
&\overset{\Psi_A}{\cong}
\agrg,
\end{align*}
and thus we get the isomorphism
\[
\agrg
\bigg/ 
\igrg
\xrightarrow{\psi \circ \phi}
A/I \otimes \K.
\]
However, since $I$ is also Nilsen invariant, we have that $\delta^I$ is normal, and thus we can apply \thmref{thm3} to 
it:
\[
A/I \otimes \K
\overset{\Psi_{A/I}}{\cong} 
\aigrg.
\]
Putting this together, we have a diagram
\[\begin{tikzcd}
                                                                                                     &  & A/I \otimes \K \arrow[lldd, "\Psi_{A/I}"',"\cong"]
\\
                                                                                                     &  &                                                                                                                                                                          \\
\aigrg
\arrow[rrdd, "\cong","\tau\inv"']
&  & 
\agrg \bigg/ \igrg
\arrow[dd, "\cong"',"\tilde{\eta}", dashed] 
\arrow[uu, "\cong","\psi \circ \phi"'] \\
                                                                                                     &  &                                                                                                                                                                          \\
                                                                                                     &  & 
\agrg \bigg/ \ker,
\end{tikzcd}\]
which we use to define the \iso\ $\tilde{\eta}$.
We claim that in fact $\tilde{\eta}$ is given by $\tilde{\eta}(x+[I]) = x + \ker$.
We observe that
\begin{align*}
&\tau\inv \circ \Psi_{A/I} \circ \psi \circ \phi(x + [I]) 
\\ & \quad = \tau\inv \circ \Psi_{A/I} \circ \psi\l(\Psi_A\inv(x) + (I \otimes \K)\r) 
\\ & \quad = \tau\inv \circ \Psi_{A/I} \circ (q \otimes \id_{\K}) \circ \Psi_A\inv(x). 
\end{align*}
Now observe that for $x \in A \rtimes_{\delta} G \rtimes_{\what \delta,r} G$ with $x = \lag(y)$, we have
\begin{align*}
&(q \otimes \id_{\K}) \circ \Psi_A\inv(x) 
\\ & \quad = (q \otimes \id_{\K}) \circ \Psi_A\inv \circ \lag(y) 
\\ & \quad = (q \otimes \id_{\K}) \circ \pa(y)
\\ & \quad = \pai \circ (\qgg)(y)
\end{align*}
where the last equality follows from
earlier computations.
On the other hand, we see that
\begin{align*}
&\Psi_{A/I}\inv \circ (\qgrg)(x) 
\\ & \quad = \Psi_{A/I}\inv \circ (\qgrg) \circ \lag(y) 
\\ & \quad = \Psi_{A/I}\inv \circ \laig \circ (\qgg)(y) 
\\ & \quad = \pai \circ (\qgg)(y).
\end{align*}
Thus, we conclude that
\[
(q \otimes \id_{\K}) \circ \Psi_A\inv
= \Psi_{A/I}\inv \circ (\qgrg)
\]
so computing we have
\begin{align*}
&\tau\inv \circ \Psi_{A/I} \circ (q \otimes \id_{\K}) \circ \Psi_A\inv(x) 
\\ & \quad = \tau\inv \circ \Psi_{A/I} \circ \Psi_{A/I}\inv \circ (\qgrg)(x) 
\\ & \quad = \tau\inv \circ (\qgrg)(x) 
\\ & \quad = x+\ker.
\end{align*}
Hence, the isomorphism $\tilde{\eta}$ defined in the above diagram is given by $\tilde{\eta}(x+[I]) = x + \ker$, proving the claim. Thus, if $x \in \ker$, then $\tilde{\eta}(x+[I]) = x+\ker = 0$ so that $x+[I]=0$ and hence $x \in \igrg$ by injectivity of $\tilde{\eta}$,
and we have finally established the inclusion \eqref{enough}. 
\end{proof}

In particular, if one is willing to work in the (very broad) class of $\cst$-exact groups in the sense of Kirchberg \cite{exact}, then normal coactions always pass to normal coactions on the quotients of strongly invariant ideals.

\begin{cor}
If $(A,G,\delta)$ is a normal coaction and $G$ is $\cst$-exact in the sense of Kirchberg, then $\I^s = \I^N$.  In particular, for every strongly $\delta$-invariant ideal $I$ of $A$, the induced coaction $\delta^I$ on the quotient is normal.
\end{cor}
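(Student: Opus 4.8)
The plan is to obtain the corollary as an immediate application of \thmref{thm1}. Since $\delta$ is assumed normal, the only hypothesis of \thmref{thm1} still to be checked is the exactness condition: that for every $I\in\I^s$ the sequence
\[
\begin{tikzcd}[column sep=scriptsize]
0 \arrow[r] & \igrg \arrow[r,"\iota\rtimes G\rtimes_r G"] & \agrg \arrow[r,"\qgrg"] & \aigrg \arrow[r] & 0
\end{tikzcd}
\]
is short exact. Granting this, \thmref{thm1} yields $\I^s=\I^N$ at once.

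To verify the exactness condition I would fix $I\in\I^s$ and proceed exactly as at the start of the proof of \thmref{thm1}: strong invariance of $I$ together with \cite[Theorem~2.3]{ninv} produces the short exact sequence of full crossed products by the coaction,
\[
\begin{tikzcd}[column sep=scriptsize]
0 \arrow[r] & I\rtimes G \arrow[r,"\iota\rtimes G"] & A\rtimes G \arrow[r,"q\rtimes G"] & A/I\rtimes G \arrow[r] & 0
\end{tikzcd}
\]
The essential point is that this sequence is $\what\delta$-equivariant: the maps $\iota\rtimes G$ and $q\rtimes G$ intertwine the dual actions $\widehat{\delta|_I}$, $\what\delta$, and $\widehat{\delta^I}$, so it is a short exact sequence of $G$-$\cst$-algebras with $\what\delta$-invariant ideal $I\rtimes G$ and quotient $A/I\rtimes G$. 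Now I would invoke the defining property of Kirchberg $\cst$-exactness \cite{exact}: for a $\cst$-exact group $G$, the reduced-crossed-product functor ${}\rtimes_r G$ by an action of $G$ takes short exact sequences of $G$-$\cst$-algebras to short exact sequences. Applying it to the displayed sequence gives precisely the exactness needed, the identification of $\igrg$ with the natural image of the reduced crossed product of $I\rtimes G$ being the very one already fixed in \thmref{thm1}.

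The final assertion is then a one-line consequence: every $I\in\I^s$ lies in $\I^w$, and by the equality $\I^s=\I^N$ just established it also lies in $\I^N$, so \lemref{lem4} shows that the induced coaction $\delta^I$ on $A/I$ is normal.

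The step I expect to require the most care --- and the only genuine obstacle --- is the bookkeeping surrounding the appeal to exactness. One must confirm that the ``${}\rtimes_r G$'' appearing in the exactness condition of \thmref{thm1} is the reduced crossed product by the honest (dual) action $\what\delta$, rather than some reduced crossed product by the coaction, and that the equivariant ideal and quotient crossed products agree, under the canonical isomorphisms, with $\igrg$ and $\aigrg$ as defined there. Once these identifications are pinned down, Kirchberg exactness applies verbatim and no further computation remains.
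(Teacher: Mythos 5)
Your proposal is correct and takes essentially the same approach as the paper, whose entire proof is the one-line observation that $\cst$-exactness of $G$ implies the exactness condition of \thmref{thm1}; your expanded verification (the short exact sequence of full coaction crossed products coming from strong invariance via \cite[Theorem~2.3]{ninv}, its equivariance for the dual actions, and Kirchberg exactness applied to the reduced crossed product by $\what\delta$) is exactly the argument the paper leaves implicit, and your use of \lemref{lem4} for the final assertion matches the intended reading.
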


\begin{proof}
Exactness of $G$ implies the exactness property in \thmref{thm1}.
\end{proof}

Finally, we mention that one can construct an explicit bijection of ideals between the strongly invariant ideals and Nilsen invariant ideals under the assumption of normality or maximality of the coaction.  Indeed, let $(A,G,\delta)$ be a maximal or normal coaction.  By \thmref{thmid} we have a lattice isomorphism 
$\Lad: \I^s \to \I_{\what{\delta}}(A\rtimes G)$
and by \thmref{thm2} we have a bijection 
$\Res_{\delta}: \I_{\what{\delta}}(A \rtimes G) \to \I^N$
using \lemref{lem1}.
Then the diagram
\[
\begin{tikzcd}
\I^s \arrow[rr, "\Lad"] \arrow[rrdd, dashed] &  & 
\I_{\what{\delta}}(A \rtimes G) \arrow[dd, "\Res_{\delta}"] 
\\
                                                         &  &                                                                      
\\
                                                         &  & \I^N                                                    
\end{tikzcd}
\]
defines  the desired bijection.
Notice however that if $\ker q = I \in \I^s$, then
\[
\begin{tikzcd}
0 \arrow[r] 
& I \rtimes G \arrow[r, "\iota \rtimes G"] 
& A \rtimes G \arrow[r, "q \rtimes G"] 
& A/I \rtimes G \arrow[r] 
& 0
\end{tikzcd}
\]
is exact as seen in the above proof.  In particular, $I \rtimes G = \ker(q \rtimes G)$ and so the above bijection is given by $I \mapsto \Res_{\delta} \circ \Ind_{\delta}(I)$.  Indeed
\begin{align*}
I 
&= \ker  q \mapsto \Res_{\delta}(I \rtimes G) 
\\ &= \Res_{\delta}(\ker(q \rtimes G)) 
\\ &= \ker((q \rtimes G) \circ j_A) 
\\ &= \ker((\jai  \circ q \rtimes j_G) \circ j_A) 
\\ &= \ker(\jai  \circ q) 
\\ &= \ker((q \otimes \lambda) \circ \delta) 
\\ &= \ker(\Ind_{\delta}  q \circ j_A)
\\ &= \Res_{\delta} \circ \Ind_{\delta}(\ker  q)
\\ &= \Res_{\delta} \circ \Ind_{\delta}(I)
\end{align*}
where the second to last equality follows from the proof of \lemref{lem1}.  We also see that this map is the identity whenever $\I^s = \I^N$ by \thmref{thm2}. Thus, we have proven the following theorem.

\begin{thm}
Let $(A,G,\delta)$ be a maximal or normal coaction.  Then the map $\I^s \to \I^N$ defined by $I \mapsto \Res_{\delta} \circ \Ind_{\delta}(I)$ is a bijection, which is equal to the identity whenever $\I^s = \I^N$.  In particular, this map is the identity whenever $\delta$ is normal and the exactness condition from \thmref{thm1} holds.  
\end{thm}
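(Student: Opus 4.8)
The plan is to realize the map $I \mapsto \Res_{\delta}\circ\Ind_{\delta}(I)$ as a composite of two bijections that are already in hand, and then read off the fixed-point behavior from \thmref{thm2}. Since $\delta$ is maximal or normal, \thmref{thmid}(b) supplies the lattice isomorphism $\Lad\colon \I^s \to \I_{\what\delta}(A\rtimes G)$ sending $I \mapsto I\rtimes G$. Composing with the lattice isomorphism $\Res_{\delta}\colon \I_{\what\delta}(A\rtimes G)\to\I^N$ from \thmref{thm2}(c) immediately yields a bijection $\I^s\to\I^N$, so it only remains to check that this composite agrees with $I\mapsto\Res_{\delta}\circ\Ind_{\delta}(I)$.

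For that identification I would fix $I=\ker q\in\I^s$ and use that strong invariance makes the sequence $0\to I\rtimes G\to A\rtimes G\to A/I\rtimes G\to 0$ short exact, by \cite[Theorem~2.3]{ninv}; in particular $I\rtimes G=\ker(q\rtimes G)$. Applying $\Res_{\delta}$ and expanding $q\rtimes G$ as the integrated form of the covariant pair $(\jai\circ q,\jg)$ gives $(q\rtimes G)\circ\ja=\jai\circ q$, whence $\Res_{\delta}(I\rtimes G)=\ker(\jai\circ q)=\ker((q\otimes\lambda)\circ\delta)=\ker(\Ind_{\delta}q\circ\ja)=\Res_{\delta}\circ\Ind_{\delta}(I)$, the middle equalities being exactly those recorded in the proof of \lemref{lem4}. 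Thus $\Res_{\delta}\circ\Lad=\Res_{\delta}\circ\Ind_{\delta}$ on $\I^s$, and the latter is therefore a bijection.

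For the identity statements I would invoke \thmref{thm2}(a), which says an ideal lies in $\I^N$ precisely when it is fixed by $\Res_{\delta}\circ\Ind_{\delta}$. Hence if $\I^s=\I^N$, then every $I\in\I^s$ is Nilsen invariant and is fixed, so the map is the identity; and by \thmref{thm1} the hypotheses that $\delta$ be normal and satisfy the exactness condition force $\I^s=\I^N$, giving the final assertion. The only substantive point is the identification step of the second paragraph, which hinges on $I\rtimes G=\ker(q\rtimes G)$; this rests on the exactness input from \cite{ninv}, while everything else is a formal composition of bijections, so I expect no real obstacle beyond assembling that exactness correctly.
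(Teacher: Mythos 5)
Your proposal is correct and follows essentially the same route as the paper's own argument: composing the lattice isomorphism $\Lad$ from \thmref{thmid}(b) with $\Res_{\delta}$ from \thmref{thm2}(c), identifying $I\rtimes G=\ker(q\rtimes G)$ via the exactness of $0\to I\rtimes G\to A\rtimes G\to A/I\rtimes G\to 0$ for strongly invariant $I$, and then reading off the identity statements from \thmref{thm2}(a) and \thmref{thm1}. The kernel computation you cite from \lemref{lem4} is exactly the chain of equalities the paper writes out, so there is nothing to add.
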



\begin{thebibliography}{EKQR06}

\bibitem[EKQ04]{maximal}
Siegfried Echterhoff, S.~Kaliszewski, and John Quigg, \emph{Maximal coactions},
  Internat. J. Math. \textbf{15} (2004), no.~1, 47--61. \MR{2039211}

\bibitem[EKQR06]{enchilada}
Siegfried Echterhoff, S.~Kaliszewski, John Quigg, and Iain Raeburn, \emph{A
  categorical approach to imprimitivity theorems for {$C^*$}-dynamical
  systems}, Mem. Amer. Math. Soc. \textbf{180} (2006), no.~850, viii+169.
  \MR{2203930}

\bibitem[Fis04]{fischer}
R.~Fischer, \emph{Maximal coactions of quantum groups}, SFB 478 -- Geometrische
  Strukturen in der Mathematik, 350, Univ. M\"unster, 2004.

\bibitem[Gil]{qladder}
Matthew Gillespie, \emph{An ideal correspondence result for crossed products by
  quantum groups}, arXiv:2504.11677 [math.OA].

\bibitem[GKQW]{ladder}
Matthew Gillespie, S.~Kaliszewski, John Quigg, and Dana~P. Williams,
  \emph{Bijections between sets of invariant ideals, via the ladder technique},
  arXiv:2406.06780 [math.OA].

\bibitem[KLQ13]{exoticgroup}
S.~Kaliszewski, Magnus~B. Landstad, and John Quigg, \emph{Exotic group
  {$C^*$}-algebras in noncommutative duality}, New York J. Math. \textbf{19}
  (2013), 689--711. \MR{3141810}

\bibitem[KW99]{exact}
Eberhard Kirchberg and Simon Wassermann, \emph{Permanence properties of
  {$C^*$}-exact groups}, Doc. Math. \textbf{4} (1999), 513--558. \MR{1725812}

\bibitem[Nil98]{ndual}
May Nilsen, \emph{Duality for full crossed products of {$C^\ast$}-algebras by
  non-amenable groups}, Proc. Amer. Math. Soc. \textbf{126} (1998), no.~10,
  2969--2978. \MR{1469427}

\bibitem[Nil99]{ninv}
\bysame, \emph{Full crossed products by coactions, {$C_0(X)$}-algebras and
  {$C^*$}-bundles}, Bull. London Math. Soc. \textbf{31} (1999), no.~5,
  556--568. \MR{1703865}

\bibitem[Wil07]{dana}
Dana~P. Williams, \emph{Crossed products of {$C{^\ast}$}-algebras},
  Mathematical Surveys and Monographs, vol. 134, American Mathematical Society,
  Providence, RI, 2007. \MR{2288954}

\end{thebibliography}

\providecommand{\bysame}{\leavevmode\hbox to3em{\hrulefill}\thinspace}
\providecommand{\MR}{\relax\ifhmode\unskip\space\fi MR }
\providecommand{\MRhref}[2]{%
  \href{http://www.ams.org/mathscinet-getitem?mr=#1}{#2}
}
\providecommand{\href}[2]{#2}


\end{document}